\newtheorem{thm}{Theorem}[section]
\newtheorem{lem}[thm]{Lemma}
\newtheorem{cor}[thm]{Corollary}
\newtheorem{defi}[thm]{Definition}
\newtheorem*{proposition*}{Proposition}
\numberwithin{equation}{section}
\newcommand{\norm}[1]{\lVert #1 \rVert}
\title{Determining a parabolic-elliptic-elliptic system by boundary observation of its non-negative solutions under chemotaxis background}
\author[1,*]{Yuhan Li}
\author[1,$\dagger$]{Hongyu Liu}
\author[2,$\natural$]{Catharine W. K. Lo}
\affil[1]{Department of Mathematics, City University of Hong Kong, Hong Kong, China}
\affil[2]{School of Mathematical Sciences, Shenzhen University, Shenzhen, China}
\affil[*]{yuhli2-c@my.cityu.edu.hk}
\affil[$\dagger$]{hongyu.liuip@gmail.com, hongyliu@cityu.edu.hk}
\affil[$\natural$]{cwklo@szu.edu.cn}
\date{}
\begin{document}
\maketitle
\begin{abstract}
This paper addresses a profoundly challenging inverse problem that has remained largely unexplored due to its mathematical complexity: the unique identification of all unknown coefficients in a coupled nonlinear system of mixed parabolic-elliptic-elliptic type using only boundary measurements. The system models attraction-repulsion chemotaxis—an advanced mathematical biology framework for studying sophisticated cellular processes—yet despite its significant practical importance, the corresponding inverse problem has never been investigated, representing a true frontier in the field. The mixed-type nature of this system introduces significant theoretical difficulties that render conventional methodologies inadequate, demanding fundamental extensions beyond existing techniques developed for simpler, purely parabolic models. Technically, the problem presents formidable obstacles: the coupling between parabolic and elliptic components creates inherent analytical complications, while the nonlinear structure resists standard approaches. From an applied perspective, the biological relevance adds another layer of complexity, as solutions must maintain physical interpretability through non-negativity constraints. Our work provides a complete theoretical framework for this challenging problem, establishing rigorous unique identifiability results that create a one-to-one correspondence between boundary data and the model's parameters. We demonstrate the power of our general theory through a central biological application: the full parameter recovery for an attraction-repulsion chemotaxis model with logistic growth, thus opening new avenues for quantitative analysis in mathematical biology.
~\\\\
\textbf{Keywords:} Nonlinear parabolic-elliptic-elliptic system; chemotaxis; mixed-type equations; unique identifiability; simultaneous recovery; multiplicative separable form.~\\
\textbf{2020 Mathematics Subject Classification:} 35R30, 92-10, 35Q92, 35B09, 35K99, 35J99
\end{abstract}

{\centering
\section{INTRODUCTION}\label{sec:pee_Intro}}

\subsection{Biology background and motivation of our study}\label{sec:pee_BioBack}
Nonlinear partial differential equation (PDE) systems play a central role in the mathematical modeling of biological processes involving spatial organization and collective behavior. A particularly influential example arises in the study of chemotaxis—the directed movement of cells or organisms in response to chemical gradients. Such systems are fundamental for understanding a wide range of biological phenomena, including bacterial aggregation, immune response, embryonic development, and tumor growth. This section introduces a class of chemotaxis models that form the basis of our study, beginning with classical formulations and progressively extending to more complex frameworks that incorporate multiple chemical signals and growth dynamics.

The mathematical foundation of chemotaxis modeling was established in the 1970s through the pioneering work of Keller and Segel \cite{Keller1970initiation, Keller1971model, Keller1971traveling}. Their models were derived from empirical observations of bacterial movement in response to chemical attractants, capturing the tendency of cells to aggregate and form spatial patterns. The classical Keller-Segel model consists of a pair of reaction-diffusion equations that describe the interaction between cell density and chemoattractant concentration. It incorporates both random diffusion and advective transport due to chemotactic drift:
\begin{equation}\label{eq:pee_Introduce_KS}
\begin{cases}
\partial_t u = d_u\Delta u - \chi\nabla\cdot(u\nabla v)+f(u), \\
\partial_t v = d_v\Delta v + \alpha u - \beta v,
\end{cases}
\end{equation}
where $u(x,t)$ denotes the cell density and $v(x,t)$ denotes the concentration of the chemoattractant. The positive parameters $d_u$ and $d_v$ represent diffusion rates, $\chi$ is the chemotactic sensitivity, and $f(u):[0,\infty)\to \mathbb{R}$ is a smooth function satisfying $f(0)\geq0$, often accounting for intrinsic growth or decay.

In many biological scenarios, the timescales of cellular motion and chemical diffusion can be significantly different. Chemoattractants such as nutrients or signaling molecules often diffuse much faster than cells can migrate, reaching a quasi-steady state almost instantaneously compared to the relatively slow redistribution of the population. This scale separation motivates a reduced yet biophysically relevant formulation.

A common simplification under the assumption of fast chemical diffusion leads to the parabolic-elliptic Keller-Segel system: 
\begin{equation}\label{eq:pee_Introduce_pe}
\begin{cases}
\partial_t u = d_u\Delta u - \chi\nabla\cdot(u\nabla v)+f(u), \\
0 = d_v\Delta v + \alpha u - \beta v.
\end{cases}
\end{equation}
This version is mathematically more tractable and effectively describes long-term behavior in situations where the chemoattractant reaches equilibrium much faster than the cell population. It has been widely analyzed \cite{Biler1999global, Nagai1998global, Herrero1996, Biler1998local, Gajewski1998global} and applied to model biological phenomena such as nutrient-guided migration and stable pattern formation \cite{Keller1970initiation, Keller1971model, Horstmann,lou2023mathematical}.

In many biological contexts, the relative timescales of cellular response and chemical diffusion are not fixed but may vary depending on environmental conditions or specific biological mechanisms. For instance, in certain developmental processes or under pathological conditions, the diffusion properties of signaling molecules may change, altering the dynamic coupling between cells and their chemical environment. This variability necessitates a modeling framework that can seamlessly transition between different dynamical regimes, thereby providing a more flexible tool for theoretical investigation and numerical simulation.

To bridge these diverse dynamical scenarios and incorporate variable timescale separations into a single analytical framework, one may introduce a switching parameter $\tau$ that unifies the fully parabolic and parabolic-elliptic formulations:
\begin{equation}\label{eq:pee_Introduce_tau}
\begin{cases}
\partial_t u = d_u\Delta u - \chi\nabla\cdot(u\nabla v) + f(u), \\
\tau \partial_t v = d_v\Delta v + \alpha u - \beta v,
\end{cases}
\end{equation}
where $\tau \in \{0, 1\}$. Setting $\tau=1$ recovers the fully time-dependent system \eqref{eq:pee_Introduce_KS}, while $\tau=0$ yields the parabolic-elliptic system \eqref{eq:pee_Introduce_pe}. This integrated framework facilitates the study of systems across different time-scale separations.

In many biological contexts, however, cells respond to more than one chemical signal—often integrating both attractants and repellents to navigate complex environments. This dual response mechanism is crucial for numerous physiological and ecological processes, such as immune cells coordinating movement toward inflammatory signals while avoiding inhibitory factors, or microbial populations exploring resource gradients while evading toxic regions. The ability to process competing cues enables finer spatial organization and more adaptive collective behavior than single-signal systems can capture. To model this richer, more biologically realistic behavior, the attraction-repulsion chemotaxis framework has been developed:
\begin{equation}\label{eq:pee_introduce_AR}
\begin{cases}
\partial_t u = \Delta u - \nabla \cdot (\chi u\nabla v) + \nabla \cdot (\xi u\nabla w), & \text{in } Q, \\
\tau\partial_t v = \Delta v + \alpha u - \beta v, & \text{in } Q, \\
\tau\partial_t w = \Delta w + \gamma u - \delta w, & \text{in } Q.
\end{cases}
\end{equation}
Here, $v$ represents an attractive signal (e.g., nutrients or chemoattractants) while $w$ corresponds to a repulsive one (e.g., toxins or repellents). The parameters $\chi$ and $\xi$ quantify the strengths of chemotactic attraction and repulsion, respectively, allowing the model to represent a wide spectrum of biological phenomena. Such models have been used, for instance, to describe quorum sensing in bacterial colonies, where populations modulate movement in response to self-secreted attractants and waste byproducts \cite{Painter2002volume}, as well as microglial cell behavior in Alzheimer’s disease, involving response to both amyloid-beta attractants and repulsive signals in neural tissue \cite{Luca2003chemotactic}. From a mathematical perspective, these systems exhibit rich dynamics including pattern formation, phase separation, and critical transitions. Analytical results concerning global existence, boundedness, and blow-up of solutions can be found in \cite{Lin2015large, Liu2012classical, Tao2013competing, Liu2015global, Jin2015boundedness,li2024roles}.

While the attraction-repulsion framework captures essential aspects of bidirectional chemical response, real biological populations are further regulated by intrinsic growth constraints and resource limitations. In vivo and in vitro experiments frequently observe that cell proliferation is self-regulated through density-dependent mechanisms, such as contact inhibition or nutrient depletion, which prevent unlimited growth and ensure population sustainability. To incorporate these critical ecological features and better align the model with experimentally observed behaviors, we extend the system by including logistic-type growth terms. This extension not only enhances the biological fidelity of the model but also introduces important mathematical structure that can suppress blow-up phenomena and promote global existence of solutions.

In this paper, we focus on an extended version of the attraction-repulsion model that includes logistic growth terms:
\begin{equation} \label{eq:pee_apply_main}
\begin{cases}
\partial_t u = \Delta u - \nabla \cdot (\chi u \nabla v) + \nabla \cdot (\xi u\nabla w) + r u - \mu u^2, & \text{in } Q, \\
\tau \partial_t v = \Delta v + \alpha u - \beta v, & \text{in } Q, \\
\tau \partial_t w = \Delta w + \gamma u - \delta w, & \text{in } Q, \\
\partial_{\nu}u = \partial_{\nu}v = \partial_{\nu}w = 0, & \text{on } \Sigma, \\
u(x,0) = f(x),\quad v(x,0) = g(x),\quad w(x,0) = h(x), & \text{in } \Omega.
\end{cases}
\end{equation}
Here, the logistic term $ r u - \mu u^2$ introduces density-dependent proliferation and death, crucial for modeling population saturation and carrying capacity—features that are fundamental to ecological modeling and essential for realistic long-term behavior. The positive parameter $r$ represents the intrinsic growth rate, while $\mu$ quantifies the strength of intra-specific competition. This extension not only enriches the biological relevance of the model but also significantly influences its analytical properties, often ensuring global existence of solutions and preventing uncontrolled growth. Well-posedness and qualitative properties of this and related systems have been studied in works such as \cite{Zhao2017parabolic}.

Let $B := \{\chi, \xi, r, \mu, \alpha, \beta, \gamma, \delta\}$ denote the set of biological parameters. We define the measurement operator:
\begin{equation}\label{eq:pee_Measurement2}
\mathcal{M}^{+}_{B}(f,g,h) = \left( (u(x,t),v(x,t),w(x,t))|_{\Sigma},\ u(\cdot,T),\ v(\cdot,T),\ w(\cdot,T) \right), \quad x \in \Omega.
\end{equation}
The associated inverse problem is then formulated as:
\begin{equation}\label{eq:pee_inverseP2}
\mathcal{M}^{+}_{B} \to B,
\end{equation}
where the superscript '$+$' emphasizes that we consider non-negative solutions biologically relevant for population densities. 
We are again concerned with the issue of unique identifiability: for two configurations $B_1$ and $B_2$, does the following hold?
\begin{equation}\label{eq:pee_UniqueIden2}
\mathcal{M}^{+}_{B_1} = \mathcal{M}^{+}_{B_2} \quad \text{if and only if} \quad B_1 = B_2.
\end{equation}
A rigorous statement of this corollary is provided in Section \ref{sec:pee_mainrs}.

\subsection{Mathematical setup}\label{sec:pee_Setup}
Building upon the biological foundation established above, we now present the precise mathematical framework investigated in this work. We consider the following coupled nonlinear parabolic-elliptic-elliptic system:
\begin{equation}\label{eq:pee_intro}
\begin{cases}
\partial_t u = \Delta u - \nabla \cdot (\chi u\nabla v) + \nabla \cdot (\xi u\nabla w) + F(x,u), & \text{in } Q, \\
0 = \Delta v + G(x,u,v), & \text{in } Q, \\
0 = \Delta w + H(x,u,w), & \text{in } Q, \\
\partial_{\nu}u = \partial_{\nu}v = \partial_{\nu}w = 0, & \text{on } \Sigma, \\
u(x,0) = f(x),\quad v(x,0) = g(x),\quad w(x,0) = h(x), & \text{in } \Omega,
\end{cases}
\end{equation}
where $\Omega \subset \mathbb{R}^{n}$ ($n \geq 2$) is a bounded Lipschitz domain, $Q := \Omega \times (0,\infty)$, and $\Sigma := \partial \Omega \times (0,\infty)$ for $T \in (0,\infty]$. The functions $F(x,m): \Omega \times \mathbb{R} \to \mathbb{R}$ and $G(x,m,n), H(x,m,n): \Omega \times \mathbb{R} \times \mathbb{R} \to \mathbb{R}$ are real-valued with respect to $m$ and $n$.

System \eqref{eq:pee_intro} represents a generalized attraction-repulsion chemotaxis model where the chemical equations are assumed to reach equilibrium rapidly relative to cellular dynamics. Biologically, $u(x,t)$ denotes the population density of cells or microorganisms, while $v(x,t)$ and $w(x,t)$ represent the concentrations of a chemoattractant and chemorepellent, respectively. The parameters $\chi > 0$ and $\xi > 0$ quantify the strengths of attraction toward $v$ and repulsion from $w$, capturing the bidirectional chemical response mechanism discussed in Subsection \ref{sec:pee_BioBack}.

More generally, the system can be formulated with variable time-scale separation through the introduction of parameters $\tau_1, \tau_2 \in \{0,1\}$:
\begin{equation}\label{eq:pee_mainuse0}
\begin{cases}
\partial_t u = \Delta u - \nabla \cdot (\chi u\nabla v) + \nabla \cdot (\xi u\nabla w) + F(x,u), & \text{in } Q, \\
\tau_1 \partial_t v = \Delta v + G(x,u,v), & \text{in } Q, \\
\tau_2 \partial_t w = \Delta w + H(x,u,w), & \text{in } Q,\\
\partial_{\nu}u = \partial_{\nu}v = \partial_{\nu}w = 0, & \text{on } \Sigma, \\
u(x,0) = f(x),\quad v(x,0) = g(x),\quad w(x,0) = h(x), & \text{in } \Omega.
\end{cases}
\end{equation}

Setting $\tau_1 = \tau_2 = 0$ yields the parabolic-elliptic-elliptic system \eqref{eq:pee_intro}, while $\tau_1 = \tau_2 = 1$ corresponds to a fully parabolic system. Our analysis encompasses both scenarios, with results applicable to this broader framework.

The central inverse problem addressed in this paper concerns the determination of unknown coefficients $\chi, \xi$ and nonlinear functions $F$, $G$, and $H$ from boundary and final-time measurements. Formally, we define the measurement operator:
\begin{equation}\label{eq:pee_Measurement1}
\mathcal{M}^{+}_{\chi,\xi,F,G,H}(f,g,h) = \left( u(x,t)|_{\Sigma}, v(x,t)|_{\Sigma}, w(x,t)|_{\Sigma}, u(\cdot,T), v(\cdot,T), w(\cdot,T) \right),
\end{equation}
where the superscript '$+$' emphasizes that we consider non-negative solutions biologically relevant for population densities. The inverse problem is then formulated as:
\begin{equation}\label{eq:pee_inverseP1}
\mathcal{M}^{+}_{\chi,\xi,F,G,H} \to {\chi, \xi, F, G, H}.
\end{equation}

We focus on the fundamental question of unique identifiability: for two parameter configurations $A_1 = {\chi_1, \xi_1, F_1, G_1, H_1}$ and $A_2 = {\chi_2, \xi_2, F_2, G_2, H_2}$, does

\begin{equation}\label{eq:pee_UniqueIden1}
\mathcal{M}^{+}_{A_1} = \mathcal{M}^{+}_{A_2} \quad \text{if and only if} \quad A_1 = A_2?
\end{equation}

A rigorous statement of our main identifiability result is presented in Section \ref{sec:pee_mainrs}.

For the remainder of this paper, we specialize to the system \eqref{eq:pee_mainuse0} into:
\begin{equation}\label{eq:pee_mainuse}
\begin{cases} 
\partial_t u= \Delta u- \nabla \cdot (\chi u\nabla v)+\nabla \cdot (\xi u\nabla w)+F(x,u),&\  \text{in} \    Q,\\ 
\tau \partial_t v=\Delta v+G(x,u,v), &\  \text{in} \    Q,\\
\tau \partial_t w=\Delta w+H(x,u,w), &\  \text{in} \    Q,\\
\partial_{\nu}u=\partial_{\nu}v=\partial_{\nu}w=0, &\  \text{on}\    \Sigma,\\ 
u(x,0)=f(x),\, v(x,0)=g(x) ,\, w (x,0)=h(x),  &\  \text{in} \   \Omega,\\ 
\end{cases}
\end{equation} 
where $\tau \in {0,1}$ unifies the parabolic-elliptic and fully parabolic cases. The nonlinear terms assume specific analytic forms:
\begin{equation}\label{eq:pee_definition_F}
   F(x,m):=rm-\mu m^2, 
\end{equation}
representing logistic growth with intrinsic rate $r > 0$ and carrying capacity parameter $\mu > 0$. The functions $G(x,m,n), H(x,m,n)$ are analytic with respect to $m$ and $n,$ and are of the forms below:
\begin{equation}\label{eq:pee_definition_GH}
 G(x,m,n):=\sum\limits_{p,q=0,p+q>0}\alpha^{pq}_{j}(x)m^{p}_{j}n^q_j, \quad H(x,m,n):=\sum\limits_{r,s=0,r+s>0}\beta^{rs}_{j}(x)m^{r}_{j}n^s_j.
\end{equation}
This formulation maintains biological relevance while providing sufficient mathematical structure for rigorous analysis of the inverse problem.

The primary novelty of our work lies in establishing unique identifiability results for this class of coupled nonlinear chemotaxis systems, addressing a significant gap in the inverse problems literature for biologically relevant PDE models with mixed parabolic-elliptic dynamics.

\subsection{Statement of main results} \label{sec:pee_mainrs}
This section is devoted to establishing the mathematical framework and presenting the main results. We begin by introducing several admissible classes for the nonlinearities $F$, $G$, and $H$ that appear in systems \eqref{eq:pee_mainuse}. These classes ensure the necessary analytic and structural conditions required for our subsequent analysis. The definitions generalize previous frameworks by allowing expansion around arbitrary non-negative constant solutions and incorporating spatial dependence through multiplicative separability conditions.

Suppose $(u_0,v_0,w_0)$ is a known non-negative constant solution of \eqref{eq:pee_mainuse} and $F,G,H$ are analytic. Then, we can introduce the following admissible classes.

\begin{defi}\label{defi:pee_admissibleF}
Let $(u_0,v_0,w_0)$ be a known non-negative constant solution of \eqref{eq:pee_mainuse}. We say that $U(x,z): \mathbb{R}^n \times \mathbb{C} \to \mathbb{C}$ is admissible, denoted by $U \in \mathcal{A},$ if:

(a) The map $z \mapsto U(\cdot,  z) $ is holomorphic with value in $C^{2+\alpha}_0(\bar{\Omega}),$

(b) $U(x,u_0)=0$ for all $x \in \Omega,$

It is clear that if $U$ satisfies these two conditions, it can be expanded into a power series
\[U(x,z)=\sum\limits^{\infty}_{m=1}U_{m}(x)\frac{z^m }{m!},\]
where $U_{m}(x)=\frac{\partial^m}{\partial z^m}U(x,u_0)\in C^{2+\alpha}_0(\Omega)$.
\end{defi}

This admissibility condition is imposed by analytically extending $U$ to a holomorphic function $\tilde{U}$ in the complex variable $z$ and then restricting back to the real line. Since $U$ is real-valued for real arguments, we assume the coefficients $U_m(x)$ are real-valued.

We next define a specific spatial structure required for the subsequent analysis.

\begin{defi}
    We say that a function $A(x)$ is of \emph{multiplicative separable form} (with respect to the $n$-th spatial variable), if \[A(x)=A_1(x_1,\dots,x_{n-1})A_2(x_n),\]
for $x=(x_1,\dots,x_n)$ and \[\int_{\{x_n:(x_1,\dots,x_n)\in\Omega\}} A_2(x_n)dx_n\neq 0.\]
\end{defi}

\begin{defi}\label{defi:pee_admissibleG}
Let $(u_0,v_0,w_0)$ be a known non-negative constant solution of \eqref{eq:pee_mainuse}.  We say that $V(x,p,q): \mathbb{R}^n \times \mathbb{C}  \times \mathbb{C} \to \mathbb{C}$ is admissible, denoted by $V \in \mathcal{B},$ if: 

(a) The map $(p,q) \mapsto V(\cdot, p,q) $ is holomorphic with value in $C^{2+\alpha}_0(\bar{\Omega}),$

(b) $V(x,u_0,v_0)=0$ for all $x \in \Omega,$ 

(c) The first-order Taylor coefficient $V^{(0,1)}(x,\cdot,v_0)$ is constant,

(d) The first-order Taylor coefficient $V^{(1,0)}(x,u_0,\cdot)$ is independent of one variable,

(e) The higher-order Taylor coefficient $V^{(k)}(x,\cdot,\cdot) (k\geq 2)$ is of a multiplicative separable form for $x\in \mathbb{R}^{n}$.

It is clear that if $V$ satisfies these four conditions, it can be expanded into a power series
\[V(x,p,q)=\sum\limits^{\infty}_{m\geq1,\, n\geq 0}V_{mn}(x)\frac{p^m q^n}{(m+n)!},\]
where $V_{mn}(x)=\frac{\partial^m}{\partial p^m}\frac{\partial^n}{\partial q^n}V(x,u_0,v_0) \in C^{2+\alpha}_0(\Omega)$.
\end{defi}

Similarly, we can give admissible class for $H(x,u,w)$ as:
\begin{defi}\label{defi:pee_admissibleH}
Let $(u_0,v_0,w_0)$ be a known non-negative constant solution of \eqref{eq:pee_mainuse}.  We say that $W(x,p,q): \mathbb{R}^n \times \mathbb{C}  \times \mathbb{C} \to \mathbb{C}$ is admissible, denoted by $W \in \mathcal{C},$ if: 

(a) The map $(p,q) \mapsto W(\cdot, p,q) $ is holomorphic with value in $C^{2+\alpha}_0(\bar{\Omega}),$

(b) $W(x,u_0,w_0)=0$ for all $x \in \Omega,$

(c) The first-order Taylor coefficient $W^{(0,1)}(x,\cdot,w_0)$ is constant,

(d) The first-order Taylor coefficient $W^{(1,0)}(x,u_0,\cdot)$ is independent of one variable,

(e) The higher-order Taylor coefficient $W^{(k)}(x,\cdot,\cdot) (k\geq 2)$ is of a multiplicative separable form for $x\in \mathbb{R}^{n}$.

It is clear that if $W$ satisfies these four conditions, it can be expanded into a power series
\[W(x,p,q)=\sum\limits^{\infty}_{m\geq1,\, n\geq 0}W_{mn}(x)\frac{p^m q^n}{(m+n)!},\]
where $W_{mn}(x)=\frac{\partial^m}{\partial p^m}\frac{\partial^n}{\partial q^n}W(x,u_0,w_0) \in C^{2+\alpha}_0(\Omega)$.
\end{defi}

It can be easily seen that for $F\in \mathcal{A},G\in \mathcal{B}$ and $H\in \mathcal{C},$ $F,G$ and $H$ are of the forms \eqref{eq:pee_definition_F} and \eqref{eq:pee_definition_GH} respectively.

These definitions generalize the admissibility conditions used in previous biological inverse problems \cite{LLL2024inverse, LL2024determining}. Unlike \cite{LL2024determining}, which expands around $(0,0)$, our framework allows for expansion around any constant solution $(u_0, v_0, w_0)$. Furthermore, compared to \cite{LLL2024inverse} which requires constant Taylor coefficients, we allow the coefficients $V_{mn}(x)$ and $W_{mn}(x)$ to be functions of $x$, subject to the separability condition, thereby broadening the applicability of our results.

Under these admissibility conditions, we can establish the well-posedness of \eqref{eq:pee_mainuse}. We consider a more general system:
\begin{equation}\label{eq:pee_wellposed_use}
\begin{cases} 
\partial_t u= \Delta u- \nabla \cdot (\chi u\nabla v)+\nabla \cdot (\xi u\nabla w)+h(u),&\  \text{in} \    Q,\\ 
\tau \partial_t v=\Delta v+b_1(x,u,v), &\  \text{in} \    Q,\\
\tau \partial_t w=\Delta w+b_2(x,u,w), &\  \text{in} \    Q,\\
\partial_{\nu}u=\partial_{\nu}v=\partial_{\nu}w=0, &\  \text{on}\    \Sigma,\\ 
u(x,0)=f(x),\, v(x,0)=g(x) ,\, w (x,0)=h(x),  &\  \text{in} \   \Omega.\\ 
\end{cases}
\end{equation} 

When $\tau = 1$, the system \eqref{eq:pee_wellposed_use} is fully parabolic. Choosing $b_1(x,u,v) = u - v$ and $b_2(x,u,w) = u - w$, it is known from \cite{jiao2024global} that if $h(u)$ satisfies
\[
h \in C^1([0,\infty)), \quad h(0) \geq 0, \quad h(s) \leq r - \mu s^{\gamma} \text{ for } s \geq 0, \quad \mu, \gamma > 0, \quad r \geq 0,
\]
then \eqref{eq:pee_wellposed_use} possesses a global classical solution $(u,v,w)$ under certain conditions (see Theorem 1.1 in \cite{jiao2024global}). 

When $\tau = 0$, the system \eqref{eq:pee_wellposed_use} becomes parabolic-elliptic-elliptic. With $b_1(x,u,v) = u - v$, $b_2(x,u,w) = u - w$, and $h(u) = 0$, local and global well-posedness results can be established based on Theorems 1.1 and 1.2 in \cite{shi2015well}. For the logistic case $h(u) = r u - \mu u^2$ and with $f \in C^{2+\alpha}(\Omega)$, a unique local solution exists. Modifying $b_1$ and $b_2$ to $b_1(x,u,v) = \alpha u - \beta v$ and $b_2(x,u,w) = \gamma u - \delta w$, Theorem 1.1 in \cite{Zhao2017parabolic} guarantees a unique, uniformly bounded global classical solution $(u,v,w)$ to \eqref{eq:pee_wellposed_use}.

These results provide the necessary well-posedness foundation for the applied model \eqref{eq:pee_apply_main}.
We now present the main results of this paper concerning the inverse problems. Our goal is to uniquely determine the unknown biological parameters—including the chemotactic sensitivities $\chi$ and $\xi$, the logistic growth coefficients $r$ and $\mu$, and the coefficients $\alpha^{pq}$, $\beta^{pq}$ (for $p,q \geq 0$, $p+q>0$) governing the chemical kinetics—from the measurement operator $\mathcal{M}^{+}_{A}$.

\begin{thm} \label{thm:pee_mainthm}
Suppose that the system \eqref{eq:pee_mainuse} has a solution $(u,v,w)\in C^{1+\frac{\alpha}{2},2+\alpha}_0(Q)\times C^{1+\frac{\alpha}{2},2+\alpha}_0(Q)\times C^{1+\frac{\alpha}{2},2+\alpha}_0(Q)$, and coefficients $\chi$ and $\xi$ in \eqref{eq:pee_mainuse}, as well as all orders of $u$, $v$ and $w$, defined under the notion of high-order variation, are independent of one spatial variable.  Assume $F \in \mathcal{A},$ $G \in \mathcal{B}$ and $H\in\mathcal{C}$ for $j=1,2. $ Let $\mathcal{M}^{+}_{A}$ be the associated measurement map as defined in \eqref{eq:pee_Measurement1}. For any $(f,g,h)\in C^{2+\alpha}(\Omega)\times  C^{2+\alpha}(\Omega)\times C^{2+\alpha}(\Omega)$, one has  \[\mathcal{M}^{+}_{A_1}(f,g,h)=\mathcal{M}^{+}_{A_2}(f,g,h),\]
then it holds that 
\[ A_1=A_2  \ \text{in}\  \, \Omega \times \mathbb{R}.\]
\end{thm}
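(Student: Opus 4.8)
The plan is to deploy the \emph{higher-order linearization} scheme, using that the measurement map, together with the analytic dependence of solutions on their data, encodes every multilinear coefficient of the Taylor expansion of $(u,v,w)$ about the common constant equilibrium $(u_0,v_0,w_0)$. I would first fix $N\in\N$ and prescribe data $(f,g,h)=(u_0,v_0,w_0)+\sum_{l=1}^{N}\epsilon_l(f_l,g_l,h_l)$ with small $\epsilon=(\epsilon_1,\dots,\epsilon_N)$; by the well-posedness recalled above the solution is analytic in $\epsilon$ near the origin, with $(u,v,w)|_{\epsilon=0}=(u_0,v_0,w_0)$. Applying $\partial_{\epsilon_{l_1}}\cdots\partial_{\epsilon_{l_k}}|_{\epsilon=0}$ to the hypothesis $\mathcal{M}^{+}_{A_1}=\mathcal{M}^{+}_{A_2}$ shows that, for the two parameter sets, each mixed variation $(u^{(k)},v^{(k)},w^{(k)})$ has the same Dirichlet trace on $\Sigma$—the Neumann trace being zero by the boundary condition, so the full lateral Cauchy data coincide—and the same value at $t=T$. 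These matched data, produced at every order and for arbitrary input directions $(f_l,g_l,h_l)$, are the raw material for an order-by-order recovery.

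At first order the linearization of \eqref{eq:pee_mainuse} reads $\partial_t u^{(1)}=\Delta u^{(1)}-\chi u_0\,\Delta v^{(1)}+\xi u_0\,\Delta w^{(1)}+F^{(1)}u^{(1)}$ together with $\Delta v^{(1)}+G^{(1,0)}u^{(1)}+G^{(0,1)}v^{(1)}=0$ and $\Delta w^{(1)}+H^{(1,0)}u^{(1)}+H^{(0,1)}w^{(1)}=0$, the divergence-form chemotaxis terms collapsing to $u_0\Delta v^{(1)}$ and $u_0\Delta w^{(1)}$ since $\nabla v_0=\nabla w_0=0$. Crucially, because the biologically relevant equilibrium has $u_0>0$, the sensitivities $\chi,\xi$ already appear at this order, which is precisely the reason for expanding about a nonzero $u_0$ rather than the origin. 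I would then subtract the two first-order solutions—the difference having vanishing lateral Cauchy data and vanishing terminal value—pair it against freely chosen solutions of the (backward) adjoint linear system, and integrate by parts over $Q$. Invoking the admissibility normalizations from Definitions \ref{defi:pee_admissibleG}--\ref{defi:pee_admissibleH} (constancy of $G^{(0,1)},H^{(0,1)}$ and one-variable independence of $G^{(1,0)},H^{(1,0)}$), the resulting identities force $\chi_1=\chi_2$, $\xi_1=\xi_2$, $F^{(1)}_1=F^{(1)}_2$, and equality of all first-order Taylor coefficients of $G$ and $H$.

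The recovery then proceeds by induction on the order $k$. Assuming $\chi,\xi$ and every Taylor coefficient of $F,G,H$ of order below $k$ agree, the variations $(u^{(j)},v^{(j)},w^{(j)})$ for $j<k$ coincide for the two systems, so the $k$-th variation difference solves the \emph{same} first-order linear operator with a source built from products of these known lower-order variations multiplied by the sought order-$k$ coefficient differences; its Cauchy and terminal data again vanish. Testing against adjoint solutions converts this into integral identities of the form $\int_Q \big(\delta F_k,\ \delta G_{mn},\ \delta H_{mn}\big)\cdot(\text{products of lower-order variations})\cdot(\text{adjoint})=0$ valid for all admissible inputs. Because all variations are independent of $x_n$, I would integrate these identities in $x_n$ first; the multiplicative separable structure $A(x)=A_1(x')A_2(x_n)$ with $\int A_2\,dx_n\neq0$ then reduces the unknown to its $(n-1)$-dimensional profile, and the completeness of products of solutions of the reduced linear system (obtained by a Runge/CGO-type density argument) forces each coefficient difference to vanish, completing the induction and yielding $A_1=A_2$.

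I expect the principal difficulty to lie in two entangled places. First, constructing a sufficiently rich family of solutions to the coupled, mixed-type linearized operator—so that the products appearing in the integral identities are dense enough to annihilate arbitrary separable coefficients—must be done while respecting the Neumann boundary condition and the $x_n$-independence constraint; the elliptic $v,w$ equations are slaved to the parabolic $u$ equation, so single-equation density results cannot be quoted verbatim and the chemotaxis cross-terms $\chi u_0\Delta v^{(1)}$, $\xi u_0\Delta w^{(1)}$ couple the three components at every order. Second, disentangling the product $\chi u_0$ (and $\xi u_0$) to isolate $\chi,\xi$ requires knowing $u_0$ and carefully separating the contributions of the attraction and repulsion channels in the first-order identity; I anticipate this first-order separation, rather than the inductive step, to be the most delicate part of the argument.
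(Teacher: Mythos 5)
Your overall architecture --- higher-order linearization about a constant state, subtracting the two families of variations, testing against adjoint/CGO solutions, and exploiting the $x_n$-independence together with the multiplicative separable structure (Lemma \ref{lem:pee_MultiSepe}) in an induction on the order --- is the same as the paper's. The substantive divergence, and the place where your argument has a genuine gap, is the claim that $\chi$ and $\xi$ are recovered at \emph{first} order because $u_0>0$. In the $\tau=0$ regime the elliptic equations slave $v^{(1)}$ and $w^{(1)}$ entirely to $u^{(1)}$: substituting $\Delta v^{(1)}=-\big(G^{(1,0)}u^{(1)}+G^{(0,1)}v^{(1)}\big)$ and its analogue for $w^{(1)}$ into your linearized $u$-equation, the first-order dynamics of $u^{(1)}$ involve only the lumped coefficients $F'(u_0)+\chi u_0G^{(1,0)}-\xi u_0H^{(1,0)}$ (multiplying $u^{(1)}$), $\chi u_0G^{(0,1)}$ (multiplying $v^{(1)}$) and $-\xi u_0H^{(0,1)}$ (multiplying $w^{(1)}$), where $v^{(1)}$ and $w^{(1)}$ are themselves nonlocal functionals of $u^{(1)}$ with no independent input channel. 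Your integral identities therefore determine at best these combinations, not $\chi$, $\xi$ and $F'(u_0)$ separately; the disentangling you defer to ``the most delicate part'' is exactly the step that is missing, and it would in any case require $G^{(0,1)},H^{(0,1)}\neq0$, which is not among the hypotheses.

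The paper avoids this obstruction by taking the base state of the expansion to be the zero solution, so that the first-order system \eqref{eq:pee_pf1st} contains no chemotaxis terms at all (only $r$, $\alpha^{10},\alpha^{01},\beta^{10},\beta^{01}$ are read off at first order), and $\chi$, $\xi$, $\mu$ first appear at second order as coefficients of three structurally distinct quadratic sources $C(x,t)$, $D(x,t)$ and $(u^{(I)})^2$ in \eqref{eq:pee_pf2nd}; these are then isolated by switching the input channels ($f_1=0$ versus $g_1=0$, and so on) and by the non-degeneracy of $\nabla l(x;\theta)$ guaranteed by Lemma \ref{lem:pee_solForm}. To repair your route you must either add the hypotheses and the independent-variation argument needed to split the lumped first-order coefficients, or follow the paper and push $\chi,\xi,\mu$ to the second-order identity, where they multiply linearly independent known sources.
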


Based on the well-posedness results discussed above, we derive the following corollary for the specific biological system \eqref{eq:pee_apply_main}.
\begin{cor}\label{cor:pee_appcor}
 Let $\mathcal{M}^{+}_{B_j}$ $(B_j=\{\chi_j,\xi_j,r_j,\mu_j,\alpha_j,\beta_j,\gamma_j,\delta_j\},j=1,2)$ be the measurement map associated to the following system:
\begin{equation} \label{eq:pee_apply_prop}
\begin{cases} 
\partial_t u_j= \Delta u_j- \nabla \cdot (\chi_j u_j \nabla v_j)+\nabla \cdot (\xi_j u_j\nabla w_j)+r_ju-\mu_j u^2,&\  \text{in} \    Q,\\ 
\tau \partial_t v_j=\Delta v_j+\alpha_j u_j-\beta_j v_j, &\  \text{in} \    Q,\\
\tau \partial_t w_j=\Delta w_j+\gamma_j u_j-\delta_j w_j, &\  \text{in} \    Q,\\
\partial_{\nu}u_j=\partial_{\nu}v_j=\partial_{\nu}w_j=0, &\  \text{on}\    \Sigma,\\ 
u_j(x,0)=f(x),\, v_j(x,0)=g(x) ,\, w_j(x,0)=h(x),  &\  \text{in} \   \Omega.
\end{cases} 
\end{equation} 
For a given set of parameters $\beta_j,\delta_j,\chi_j$, and $\xi_j$, suppose there exists a solution $(u_j,v_j,w_j)$ to \eqref{eq:pee_apply_prop} that also satisfies the assumptions of Theorem \ref{thm:pee_mainthm}. If for any $f,g,h \in C^{2+\alpha}(\Omega),$ one has  \[\mathcal{M}^{+}_{B_1}(f,g,h)=\mathcal{M}^{+}_{B_2}(f,g,h),\]
then it holds that 
\[ B_1=B_2  \ \text{in}\  \, \Omega \times \mathbb{R}.\]
\end{cor}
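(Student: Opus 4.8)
The plan is to recognize the concrete biological system \eqref{eq:pee_apply_prop} as a special case of the general model \eqref{eq:pee_mainuse} and then apply Theorem \ref{thm:pee_mainthm} directly. First I would fix the common constant solution about which the admissible expansions are taken. Since the positive steady state $(r/\mu,\ \alpha r/(\beta\mu),\ \gamma r/(\delta\mu))$ depends on the very parameters to be recovered, it would differ between $B_1$ and $B_2$; I therefore expand instead about $(u_0,v_0,w_0)=(0,0,0)$, which is a non-negative constant solution of \eqref{eq:pee_apply_prop} for every admissible choice of parameters. This ensures that both configurations are expanded about the same base point, so that the identifiability conclusion of the main theorem applies without modification.

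The first substantive step is to verify membership in the admissible classes. Writing $F_j(x,m)=r_j m-\mu_j m^2$, $G_j(x,m,n)=\alpha_j m-\beta_j n$ and $H_j(x,m,n)=\gamma_j m-\delta_j n$, each is a polynomial, hence entire in the relevant complex variables with constant (thus $C^{2+\alpha}_0(\bar\Omega)$) coefficients, and each vanishes at $(0,0,0)$, so conditions (a) and (b) of Definitions \ref{defi:pee_admissibleF}--\ref{defi:pee_admissibleH} hold. For $G_j$ I would check that $G_j^{(0,1)}=-\beta_j$ is constant (condition (c)) and $G_j^{(1,0)}=\alpha_j$ is constant, in particular independent of one variable (condition (d)), while every higher-order coefficient $G_j^{(k)}$ with $k\ge 2$ vanishes identically and is hence trivially of multiplicative separable form (condition (e)); the argument for $H_j$ is identical. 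Consequently $F_j\in\mathcal A$, $G_j\in\mathcal B$ and $H_j\in\mathcal C$.

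With admissibility established, and using the standing hypothesis that the solutions $(u_j,v_j,w_j)$ meet all assumptions of Theorem \ref{thm:pee_mainthm} (regularity and independence of one spatial variable), the hypothesis $\mathcal M^{+}_{B_1}=\mathcal M^{+}_{B_2}$ is exactly $\mathcal M^{+}_{A_1}=\mathcal M^{+}_{A_2}$ for the induced general parameter sets $A_j=\{\chi_j,\xi_j,F_j,G_j,H_j\}$. The theorem then yields $A_1=A_2$, namely $\chi_1=\chi_2$ and $\xi_1=\xi_2$, together with the functional identities $F_1\equiv F_2$, $G_1\equiv G_2$ and $H_1\equiv H_2$. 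The closing step is purely algebraic: matching the power-series coefficients in Definitions \ref{defi:pee_admissibleF}--\ref{defi:pee_admissibleH} gives $r_1=r_2$ and $\mu_1=\mu_2$ from $F_1\equiv F_2$; $\alpha_1=\alpha_2$ and $\beta_1=\beta_2$ from $G_1\equiv G_2$; and $\gamma_1=\gamma_2$ and $\delta_1=\delta_2$ from $H_1\equiv H_2$. Assembling these equalities yields $B_1=B_2$.

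As this is a direct corollary of Theorem \ref{thm:pee_mainthm}, I anticipate no genuine analytic obstacle; the only delicate point is the choice of expansion point. The main care required is to compare the two parameter sets about a common equilibrium---forcing the choice $(0,0,0)$ over the parameter-dependent positive steady state---and to confirm that the vanishing higher-order coefficients satisfy the separability requirement (e) vacuously, so that the hypotheses of Theorem \ref{thm:pee_mainthm} are genuinely in force.
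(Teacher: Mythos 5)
Your reduction is essentially the paper's own argument for the case $\tau=0$: the paper likewise identifies \eqref{eq:pee_apply_prop} as the special case of \eqref{eq:pee_mainpfuse} in which $\alpha_j=\alpha^{10}_j$, $\beta_j=\alpha^{01}_j$, $\gamma_j=\beta^{10}_j$, $\delta_j=\beta^{01}_j$ and all other Taylor coefficients vanish, expands about the zero constant solution, and then runs the machinery of Theorem \ref{thm:pee_mainthm}. Your verification of the admissibility conditions (including the observation that the vanishing higher-order coefficients satisfy the separability requirement vacuously) and the final matching of power-series coefficients are correct and are left implicit in the paper.

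There is, however, a genuine gap in the case $\tau=1$, which the corollary also covers. The proof of Theorem \ref{thm:pee_mainthm} given in the paper explicitly restricts to $\tau=0$, with the authors stating that the $\tau=1$ case is only demonstrated within the proof of the corollary itself. Consequently you cannot invoke the theorem as a black box when the chemical equations are parabolic: the recovery of $\alpha^{01}$, $\alpha^{10}$, $\beta^{01}$, $\beta^{10}$ in the main proof relies on elliptic CGO solutions $\omega=e^{\zeta\cdot x}$ with $\vert\zeta\vert^2=-\alpha^{01}_1$ solving $-\Delta\omega-\alpha^{01}_1\omega=0$, and these no longer solve the relevant adjoint problem once the terms $\partial_t v$ and $\partial_t w$ are present. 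The paper's proof of the corollary therefore re-derives the first-order linearization \eqref{eq:pee_apply_v} for $\tau=1$ and uses the time-dependent CGO solutions $\omega=e^{(\vert\xi\vert^2+\beta_2)t-\mathrm{i}\xi\cdot x}$ of $-\partial_t\omega-\Delta\omega+\beta_2\omega=0$, first recovering $\beta_j$ (by taking $f_1=0$) and then $\alpha_j$, with the same scheme for $\gamma_j$ and $\delta_j$. To complete your argument you must either supply this parabolic version of the linearization-plus-CGO step or explicitly restrict your claim to $\tau=0$.
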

 
\subsection{Technical developments and discussion}\label{sec:pee_Techn}
Our work presents a significant breakthrough by establishing a novel theoretical framework for solving inverse problems in complex biological systems modeled by coupled nonlinear partial differential equations of mixed parabolic-elliptic type.

A primary novelty of our work lies in the first systematic investigation of unique identifiability for a coupled nonlinear system of mixed parabolic-elliptic-elliptic type using only boundary measurements. Previous studies have predominantly focused on systems that are either purely parabolic \cite{choulli2018,isakov1993} or purely elliptic \cite{lassas2021inverse,lassas2020partial}, leaving a significant gap for hybrid models that arise in complex biological applications.

This mixed-type structure introduces extraordinary theoretical challenges that render conventional techniques inadequate. Unlike purely parabolic or elliptic systems, the coupled model requires fundamentally new approaches to handle the interplay between time-dependent dynamics and instantaneous equilibrium constraints. The inherent analytical complexities are substantial: the nonlinear coupling between equations of different types prevents direct application of standard methods such as energy estimates or perturbation techniques.

A major technical difficulty arises from the fact that while the functions $v(x,t)$ and $w(x,t)$ satisfy elliptic equations, they maintain crucial time dependence through their coupling with the parabolic equation for $u(x,t)$. This temporal dependence directly contradicts the construction of traditional Complex Geometric Optics (CGO) solutions, which typically rely on purely spatial harmonic functions.

To overcome these challenges, we develop a novel methodology that significantly extends existing techniques. Inspired by \cite{liu2015determining}, we introduce a framework capable of handling equations with multiplicative separable spatial structure, particularly coefficients of the form $a(x) = a_1(x_1, \cdots, x_{n-1}) a_2(x_n)$ as formalized in Lemma \ref{lem:pee_MultiSepe}. A key innovation is the combined use of the fundamental theorem of calculus and the inverse Fourier transform, which enables effective decoupling of temporal and spatial dependencies. This approach provides a unified analytical framework for simultaneous recovery of all unknown model parameters through strategic variation of initial input data, representing a substantial advancement beyond existing methods for inverse problems.

Building upon our previous work on inverse problems in biological systems, this study shifts focus from macroscopic population dynamics to the microscale processes of chemotaxis—where individual cells navigate chemical gradients. While our prior investigations \cite{DL2024inverse,LL2024determining,LLL2024inverse,LLL2025inverse} addressed system-level behaviors using purely parabolic models: \cite{LLL2024inverse} introduced a high-order variation method to ensure physiologically meaningful solutions and recover source coefficients under admissible classes, and \cite{LLL2025inverse} focused on multi-population aggregation systems with recovery of diffusion rates, advection coefficients, and interaction kernels, the current work enters a less charted territory in inverse problems: cell-level mechanism identification. Rather than examining collective outcomes, we target the fundamental signaling and response mechanisms that govern individual cell behavior, an area that remains largely open in the field of parameter identification.

In \cite{li2024simultaneous}, we began exploring environmental interaction in a 3D chemotaxis-fluid model, though within a purely parabolic framework and with limited parameter recovery. Here, we advance into biologically more realistic settings by incorporating mixed parabolic-elliptic dynamics, which better represent the rapid diffusion of chemical signals relative to cell movement.

This extension is not only mathematically novel but also physiologically critical: it allows us to model how cells process simultaneous attractive and repulsive cues in realistic microenvironments. As in all our studies, we rigorously preserve solution non-negativity to ensure that results remain consistent with biological constraints. By bridging microscopic cell behavior and population-level outcomes, this work offers a more comprehensive mathematical framework for understanding gradient-guided migration in development, immunity, and disease.

The remainder of this paper is organized as follows. In Section \ref{sec:pee_pfmain}, we present the detailed proof of the main uniqueness theorem (Theorem \ref{thm:pee_mainthm}). Section \ref{sec:pee_application} is devoted to the proof of the corollary (Corollary \ref{cor:pee_appcor}) for the specific biological application model. Additional technical lemmas and supporting results are included in the subsequent sections.

{\centering \section{PROOF OF THE MAIN THEOREM}   \label{sec:pee_pfmain}}
This section is devoted to the proof of the main theorem. We begin by establishing two key auxiliary lemmas that play a fundamental role in our analysis.

\subsection{Auxiliary lemmas} \label{sec:pee_lemmas}
 The first lemma provides a spectral representation of solutions to a linear parabolic system, which will be used to construct specific input data for our inverse problem.

\begin{lem}\label{lem:pee_solForm}
Consider the system
\begin{equation}\label{eq:pee_lemma}
\begin{cases} 
\partial_t u(x,t)-q\Delta u(x,t)+k u(x,t)= 0, &\  \text{in} \   Q,\\ 
\partial_{\nu} u(x,t)=0, &\  \text{on} \   \Sigma,
\end{cases} 
\end{equation}
where $q$ and $k$ are constants. There exists a sequence of solutions $u(x,t)$ to \eqref{eq:pee_lemma} such that 
\begin{enumerate}
    \item $u(x,t)=e^{\theta t} l(x;\theta)$ for some $\theta \in \mathbb{R}^n$ and $l(x;\theta) \in C^{2}(\Omega).$ Notably, $l(x;\theta)$ is not necessarily 0, and $\frac{\theta}{q}$ is its corresponding eigenvalue;
    \item There does not exist an open subset $U$ of $\Omega$ such that $\nabla l(x;\theta)=0$ in $U$.
\end{enumerate}
\end{lem}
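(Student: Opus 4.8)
The plan is to construct the required solutions by separation of variables, thereby reducing the problem to the spectral theory of the Neumann Laplacian on $\Omega$, and then to rule out the degenerate gradient behaviour of property (2) via interior analyticity of eigenfunctions together with unique continuation. First I would seek solutions of the special form $u(x,t)=e^{\theta t}l(x)$. Substituting into \eqref{eq:pee_lemma} and dividing by $e^{\theta t}$ converts the evolution equation into the stationary relation $-q\Delta l+(k+\theta)l=0$, that is, $-\Delta l=\lambda l$ with $\lambda:=-(k+\theta)/q$, together with the Neumann condition $\partial_\nu l=0$ on $\partial\Omega$. Hence $l$ must be a Neumann eigenfunction of $-\Delta$ with eigenvalue $\lambda$, and conversely any such eigenfunction yields a solution of \eqref{eq:pee_lemma} upon setting $\theta=-(q\lambda+k)$. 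This produces the claimed form, with $\theta$ a real scalar determined by the eigenvalue, so that $\theta/q$ encodes the corresponding eigenvalue up to the constants $q,k$.

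Next I would invoke the spectral theorem for the Neumann Laplacian on the bounded connected Lipschitz domain $\Omega$: the resolvent is compact, so there is a discrete sequence of eigenvalues $0=\lambda_0<\lambda_1\le\lambda_2\le\cdots\to\infty$ with eigenfunctions $\{l_m\}_{m\ge0}$ forming an orthonormal basis of $L^2(\Omega)$. Since each $l_m$ solves $-\Delta l_m=\lambda_m l_m$ in $\Omega$, an elliptic equation with constant (hence real-analytic) coefficients, interior elliptic regularity shows that $l_m$ is real-analytic in $\Omega$, and in particular $l_m\in C^2(\Omega)$. Setting $\theta_m:=-(q\lambda_m+k)$ and $u_m(x,t):=e^{\theta_m t}l_m(x)$ then gives an infinite sequence of solutions of \eqref{eq:pee_lemma}, establishing property (1); moreover every eigenfunction is nontrivial, so $l_m\not\equiv0$.

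For property (2) I would restrict to the eigenfunctions with $m\ge1$, so that $\lambda_m>0$, and argue by contradiction. Suppose $\nabla l_m\equiv0$ on some nonempty open set $U\subset\Omega$; after shrinking $U$ to a ball we may take it connected, so $l_m$ equals a constant $c$ on $U$. Then $\Delta l_m=0$ on $U$, and the eigenvalue equation forces $\lambda_m l_m=0$, that is $\lambda_m c=0$ on $U$; as $\lambda_m\ne0$ this yields $c=0$, hence $l_m\equiv0$ on $U$. But $l_m$ is real-analytic on the connected open set $\Omega$ and vanishes on the open subset $U$, so by the identity theorem for real-analytic functions $l_m\equiv0$ throughout $\Omega$, contradicting $l_m\not\equiv0$. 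Equivalently, one may invoke the strong unique continuation property for the Schr\"odinger operator $-\Delta-\lambda_m$. Thus no such $U$ exists, and the eigenfunctions $\{l_m\}_{m\ge1}$ furnish the desired sequence.

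I expect the main obstacle to be precisely this last step: one must discard the trivial constant eigenfunction (the $\lambda_0=0$ mode, whose gradient vanishes identically) and then upgrade the local vanishing of $\nabla l_m$ to the global vanishing of $l_m$. The decisive ingredient is interior analyticity of Laplace eigenfunctions combined with connectedness of $\Omega$; should one wish to avoid analyticity, the same conclusion follows from weak unique continuation for second-order elliptic operators with bounded potentials.
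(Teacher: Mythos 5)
Your proof is correct, and it is essentially the standard argument one expects here: the paper itself gives no proof of this lemma, deferring entirely to the citation \cite{LLL2024inverse}, so your write-up (separation of variables reducing to the Neumann eigenvalue problem, the spectral theorem for the Neumann Laplacian, and interior analyticity plus the identity theorem to rule out a locally constant eigenfunction) supplies exactly the reasoning the paper leaves implicit. The one genuinely delicate point --- discarding the $\lambda_0=0$ constant mode before running the unique-continuation argument --- is handled correctly; note only that the lemma's phrasing ``$\theta\in\mathbb{R}^n$'' and ``$\frac{\theta}{q}$ is its corresponding eigenvalue'' should be read as $\theta\in\mathbb{R}$ with eigenvalue $-(k+\theta)/q$, as your computation makes explicit.
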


\textit{Proof.}  See \cite{LLL2024inverse}.

The second lemma establishes a uniqueness result for functions with multiplicative separable structure, which is essential for recovering spatially dependent coefficients.

\begin{lem}\label{lem:pee_MultiSepe}
    Let $f\in C^{2+\alpha}(\Omega)$ be a function that is independent of one variable, say $x_n$; that is, $f(x)=f(x_1,\cdots,x_{n-1})\in \mathbb{R}^{n}$. Take
\begin{align}\notag
    \xi & = (0,\cdots, 0,\xi_{n})+\text{i}(\xi_1,\cdots,\xi_{n-1},0)=:{\xi^{\prime}}^{\perp}+i(\xi^{\prime},0),
\end{align}
with $\vert \xi_{n}\vert=\vert \xi^{\prime}\vert$, and let $\Phi(x)=e^{\xi\cdot x}$. Consider two functions $f(x)=\alpha(x_1,\cdots,x_{n-1})\beta(x_n)$ and $\tilde{f}(x)=\tilde{\alpha}(x_1,\cdots,x_{n-1})\tilde{\beta}(x_n)$, where
    \[\int_{\{x_n:(x_1,\dots,x_n)\in\Omega\}}\beta(x_n)dx_n=\int_{\{x_n:(x_1,\dots,x_n)\in\Omega\}}\tilde{\beta}(x_n)dx_n\neq 0.\]
  If both $f(x)$ and $\tilde{f}(x)$ satisfy $\int_{\Omega}f(x)\Phi(x)dx=0$ and $\int_{\Omega}\tilde{f}(x)\Phi(x)dx=0$, then $f(x)=\tilde{f}(x)$.
\end{lem}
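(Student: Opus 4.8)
The plan is to exploit the factorized structure of the harmonic exponential $\Phi$. Writing $x=(x',x_n)$ with $x'=(x_1,\dots,x_{n-1})$, the constraint $|\xi_n|=|\xi'|$ is precisely the identity $\xi\cdot\xi=0$ that renders $\Phi$ harmonic, and it lets me split $\Phi(x)=e^{\xi_n x_n}\,e^{\mathrm{i}\xi'\cdot x'}$ into a real exponential in the distinguished direction $x_n$ and an oscillatory Fourier mode $e^{\mathrm{i}\xi'\cdot x'}$ in the transversal variables. Since $f=\alpha(x')\beta(x_n)$ and $\tilde f=\tilde\alpha(x')\tilde\beta(x_n)$ are of multiplicative separable form, each hypothesis $\int_\Omega f\,\Phi\,dx=0$ and $\int_\Omega \tilde f\,\Phi\,dx=0$ factorizes: integrating first in $x_n$ over the slice $\{x_n:(x',x_n)\in\Omega\}$ produces a ``vertical'' factor $L(\xi_n)=\int\beta(x_n)e^{\xi_n x_n}\,dx_n$ (respectively $\tilde L(\xi_n)$), while the remaining integration in $x'$ yields, up to that factor, the $(n-1)$-dimensional Fourier transform $\widehat\alpha(\xi')$ of the compactly supported transversal profile $\alpha$ (respectively $\widehat{\tilde\alpha}(\xi')$).

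First I would use the normalization hypothesis to control the vertical factor. The function $L$ is entire of exponential type (the two-sided Laplace transform of a compactly supported $\beta$), and its value at $\xi_n=0$ is exactly the normalization constant $L(0)=\int\beta\,dx_n=c\neq0$; likewise $\tilde L(0)=c$. By continuity $L$ and $\tilde L$ are nonvanishing on a neighborhood of $\xi_n=0$. Consequently the factorized identities $\widehat\alpha(\xi')L(\xi_n)=0$ and $\widehat{\tilde\alpha}(\xi')\tilde L(\xi_n)=0$ force $\widehat\alpha(\xi')=\widehat{\tilde\alpha}(\xi')=0$ for all $\xi'$ with $|\xi'|$ small, since then $|\xi_n|=|\xi'|$ is small as well. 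Because $\alpha$ and $\tilde\alpha$ have compact support, $\widehat\alpha$ and $\widehat{\tilde\alpha}$ are real-analytic, so their vanishing on an open set propagates to all of $\mathbb{R}^{n-1}$; applying the inverse Fourier transform in $x'$ then returns the transversal profiles, and feeding these back through the normalized vertical factors pins down the $x_n$-profiles, yielding $f=\tilde f$ on $\Omega$.

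The crux, and the step I expect to be most delicate, is the coupling $|\xi_n|=|\xi'|$ built into $\Phi$: because the vertical frequency $\xi_n$ is slaved to the transversal frequency $|\xi'|$, one never observes the transversal Fourier transform $\widehat\alpha(\xi')$ in isolation but only multiplied by the frequency-dependent scalar $L(\pm|\xi'|)$. The normalization $\int\beta\,dx_n=\int\tilde\beta\,dx_n\neq0$ is exactly what unlocks this: it guarantees $L(0),\tilde L(0)\neq0$, hence a frequency window near the origin on which the vertical factors cannot vanish, and it is on this window that the transversal data is read off cleanly before analytic continuation extends the conclusion globally. Were the $\beta$-means allowed to vanish, $L$ could vanish near the origin and the argument would break, which is precisely why the normalization is imposed. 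Finally, for a general non-cylindrical $\Omega$ the slice $\{x_n:(x',x_n)\in\Omega\}$ depends on $x'$, so the clean factorization above must be justified with care; here the fundamental theorem of calculus is the natural device to separate the mean contribution of $\beta$ from the $x'$-dependent boundary terms, dovetailing with the temporal–spatial decoupling used in the main proof.
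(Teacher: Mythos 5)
Your proposal is correct in substance but follows a genuinely different route from the paper's. You read the hypothesis literally --- each of $\int_\Omega f\Phi\,dx$ and $\int_\Omega \tilde f\Phi\,dx$ vanishes for all admissible $\xi$ --- and treat the two conditions separately: the factorized identity $\widehat{\alpha}(\xi')L(\xi_n)=0$ together with $L(0)=\int\beta\,dx_n\neq 0$ forces $\widehat{\alpha}$ to vanish on a neighbourhood of the origin (where $|\xi_n|=|\xi'|$ is small), and real-analyticity of the Fourier transform of a compactly supported function propagates this to all of $\mathbb{R}^{n-1}$, giving $\alpha\equiv 0$, hence $f\equiv 0$, and likewise $\tilde f\equiv 0$. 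The paper instead works only with the weaker consequence $\int_\Omega(f-\tilde f)\Phi\,dx=0$: it expands $e^{\xi_n x_n}$ as a power series in $|\xi'|$, extracts the zeroth-order relation $\widehat{\alpha}\,\Gamma_{\beta,0}=\widehat{\tilde\alpha}\,\Gamma_{\tilde\beta,0}$ to conclude $\alpha=\tilde\alpha$, and then matches all higher moments $\Gamma_{\beta,j}=\Gamma_{\tilde\beta,j}$ to get $\beta=\tilde\beta$ by moment determinacy. Your route buys a cleaner argument: it avoids the paper's delicate ``comparison of coefficients'' step (delicate because $\widehat{\alpha}(\xi')$ itself varies with $|\xi'|$, so one cannot simply equate coefficients of $|\xi'|^j$), it dispenses with the moment-determinacy argument for $\beta$, and it delivers precisely the form of the statement invoked in Section 2.3, where the lemma is applied to a single product-form integrand. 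What the paper's route would buy, if made fully rigorous, is applicability to a difference $f-\tilde f$ that is not itself of separable form. Two caveats on your write-up: the closing remark about ``feeding back'' to pin down the $x_n$-profiles is vacuous once $\alpha\equiv 0$ (and unnecessary, since $f=\alpha\beta\equiv 0$ regardless of $\beta$); and the clean splitting of the integral over $\Omega$ into a $\xi'$-factor times a $\xi_n$-factor requires the slice $\{x_n:(x',x_n)\in\Omega\}$ to be independent of $x'$ --- you flag this honestly, but neither you nor the paper actually justifies it for a general bounded Lipschitz domain.
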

\begin{proof}
The proof of the first part can be found in \cite{liu2015determining}. Here, we focus on the multiplicative separable case. Take
\begin{align}\notag
    \xi & = (0,\cdots, 0,\xi_{n})+\text{i}(\xi_1,\cdots,\xi_{n-1},0)=:{\xi^{\prime}}^{\perp}+i(\xi^{\prime},0),
\end{align}
with $\vert \xi_{n}\vert=\vert \xi^{\prime}\vert$, and for the harmonic function $\Phi$, we take $\Phi(x)=e^{\xi\cdot x}$.  Assume that there exist $f$ and $\tilde{f}$ such that $\int_{\Omega}f(x)\Phi(x)dx=\int_{\Omega}\tilde{f}(x)\Phi(x)dx$, which implies
\begin{equation}\notag
    \int_{\Omega}\left(\alpha(x^{\prime})\beta(x_n)-\tilde{\alpha}(x^{\prime})\tilde{\beta}(x_n))\right)e^{\xi \cdot x}dx=0,
\end{equation}
where $x^{\prime}=(x_1,\cdots,x_{n-1})$. 
Separating variables yields
\begin{align}\notag
& \int_{\{x^{\prime}:(x^{\prime},x_n)\in\Omega\}} \alpha(x^{\prime})e^{\text{i}\xi^{\prime}\cdot x^{\prime}}dx^{\prime}\int_{\int_{\{x_n:(x^{\prime},x_n)\in\Omega\}}}\beta(x_n)e^{\xi_{n} x_n}dx_n=\\
& \qquad\qquad\qquad\qquad\int_{\{x^{\prime}:(x^{\prime},x_n)\in\Omega\}} \tilde{\alpha}(x^{\prime})e^{\text{i}\xi^{\prime}\cdot x^{\prime}}dx^{\prime}\int_{\int_{\{x_n:(x^{\prime},x_n)\in\Omega\}}}\tilde{\beta}(x_n)e^{\xi_{n} x_n}dx_n, 
\end{align}
where the integrals in $\alpha(x^{\prime})$ and $\tilde{\alpha}(x^{\prime})$ are clearly their Fourier transforms. Thus, we can rewrite this as
\begin{equation}\notag
    \hat{\alpha}(\xi^{\prime})\int_{\{x_n:(x^{\prime},x_n)\in\Omega\}}\beta(x_n)e^{\xi_{n}x_n}dx_n=\hat{\tilde{\alpha}}(\xi^{\prime})\int_{\{x_n:(x^{\prime},x_n)\in\Omega\}}\tilde{\beta}(x_n)e^{\xi_{n}x_n}dx_n. 
\end{equation}

Denote $\text{dim}(\Omega)=R.$ For $\vert \xi^{\prime}\vert<\delta<\frac{1}{2R}$ sufficiently small, we conduct a series expansion for $e^{\xi_{n} x_n}$ and have
\begin{equation}\label{eq:pee_expansionG}
    \hat{\alpha}(\xi^{\prime})\sum\limits^{\infty}_{j=0}\Gamma_{\beta,j}\vert \xi^{\prime}\vert^j=\hat{\tilde{\alpha}}(\xi^{\prime})\sum\limits^{\infty}_{j=0}\Gamma_{\tilde{\beta},j}\vert \xi^{\prime}\vert^j,
\end{equation}
where \[\Gamma_{\beta,j}=\frac{1}{j!}\int_{\{x_n:(x^{\prime},x_n)\in\Omega\}}\beta(x_n)x^j_ndx_n, \Gamma_{\tilde{\beta},j}=\frac{1}{j!}\int_{\{x_n:(x^{\prime},x_n)\in\Omega\}}\tilde{\beta}(x_n)x^j_ndx_n,\quad j=0,1,\dots.\]
This expansion can be written as
\begin{equation}\label{eq:pee_expansionG2}
     \hat{\alpha}(\xi^{\prime}) \Gamma_{\beta,0}+\hat{\alpha}(\xi^{\prime})\sum\limits^{\infty}_{j=1}\Gamma_{\beta,j}\vert \xi^{\prime}\vert^j=\hat{\tilde{\alpha}}(\xi^{\prime})\Gamma_{\tilde{\beta},0}+\hat{\tilde{\alpha}}(\xi^{\prime})\sum\limits^{\infty}_{j=1}\Gamma_{\tilde{\beta},j}\vert \xi^{\prime}\vert^j.
\end{equation}
Comparing coefficients, we find $\hat{\alpha}(\xi^{\prime})\Gamma_{\beta,0}=\hat{\tilde{\alpha}}(\xi^{\prime})\Gamma_{\tilde{\beta},0}$. By the assumption of the lemma, $\Gamma_{\beta,0}=\Gamma_{\tilde{\beta},0}\neq 0$, so $\hat{\alpha}(\xi^{\prime})=\hat{\tilde{\alpha}}(\xi^{\prime})$. Since this holds for all $\xi^{\prime}$, we conclude $\alpha=\tilde{\alpha}$ by the inverse Fourier transform. 

Next, comparing the coefficients of higher order terms of $\vert \xi^{\prime}\vert$, since we have recovered $\alpha$, we obtain  $\Gamma_{\beta,j}=\Gamma_{\tilde{\beta},j}$ for each $j\in\mathbb{N}$. This implies  that the moment functions associated to $\beta$ and $\tilde{\beta}$ are identical, thus implying that $\beta(x_n)=\tilde{\beta}(x_n)$ almost everywhere.

Hence, the lemma is proved. 
\end{proof}

\subsection{Recovery of the first-order coefficients} \label{sec:pee_mainR1}
We now proceed to the proof of our main theorem. In this proof, we focus solely on the case $\tau=0$ because inverse problems for fully parabolic systems in biological models have already been studied in works such as \cite{LL2024determining,LLL2024inverse,LLL2025inverse}. The proof for the case $\tau = 1$ is briefly demonstrated in the proof of the corollary.

For $j=1,2$, consider the system:
\begin{equation} \label{eq:pee_mainpfuse}
\begin{cases} 
\partial_t u_{j}= \Delta u_j- \nabla \cdot (\chi_j u_j \nabla v_j)+\nabla \cdot (\xi_j u_j\nabla w_j)+r_ju_j-\mu_j u^2_j,&\  \text{in} \    Q,\\ 
0=\Delta v_j+\sum\limits_{p,q=0,p+q>0}\alpha^{pq}_{j}u^{p}_{j}v^q_j, &\  \text{in} \    Q,\\
0=\Delta w_j+\sum\limits_{r,s=0,r+s>0}\beta^{rs}_{j}u^{r}_{j}w^s_j, &\  \text{in} \    Q,\\
\partial_{\nu}u_{j}=\partial_{\nu}v_{j}=\partial_{\nu}w_{j}=0, &\  \text{on}\    \Sigma,\\ 
u_j (x,0)=f(x),\, v_j (x,0)=g(x) ,\, w_j (x,0)=h(x),  &\  \text{in} \   \Omega.\\ 
\end{cases} 
\end{equation}

We begin by constructing high-order variation forms for the solutions $u_j(x,t), v_j(x,t)$ and $w_j(x,t)$, which yield a sequence of linearized systems derived from the asymptotic expansion of the original nonlinear system \eqref{eq:pee_mainpfuse} around a known constant equilibrium solution $(u_0,v_0,w_0)$. Specifically, for a sufficiently small positive constant $\varepsilon$, we expand the initial data functions as follows:
\[f(x;\varepsilon)=u_{0}+\varepsilon f_{1}(x)+\frac{1}{2}\varepsilon^2 f_{2}(x)+ \tilde{f}(x;\varepsilon),\]
\[g(x;\varepsilon)=v_{0}+\varepsilon g_{1}(x)+\frac{1}{2}\varepsilon^2 g_{2}(x)+ \tilde{g}(x;\varepsilon),\]
and
\[h(x;\varepsilon)=w_{0}+\varepsilon h_{1}(x)+\frac{1}{2}\varepsilon^2 h_{2}(x)+ \tilde{h}(x;\varepsilon),\]
where $0\leq f_{1}, f_{2},  g_1,g_2,h_1,h_2\in [C^{2+\alpha}(\Omega)]^N$, and $\tilde{f}(x;\epsilon),\tilde{g}(x;\epsilon),\tilde{h}(x;\epsilon)$ satisfy
\[\frac{1}{|\varepsilon|^3}\norm{\tilde{f}(x;\epsilon)}_{[C^{2+\alpha}(\Omega)]^N}=\frac{1}{|\varepsilon|^3}\norm{f(x;\varepsilon)-u_{0}-\varepsilon f_{1}(x)-\frac{1}{2}\varepsilon^2 f_{2}(x)}_{[C^{2+\alpha}(\Omega)]^N}\to0,
\]
\[\frac{1}{|\varepsilon|^3}\norm{\tilde{g}(x;\epsilon)}_{[C^{2+\alpha}(\Omega)]^N}=\frac{1}{|\varepsilon|^3}\norm{g(x;\varepsilon)-v_{0}-\varepsilon g_{1}(x)-\frac{1}{2}\varepsilon^2 g_{2}(x)}_{[C^{2+\alpha}(\Omega)]^N}\to0,
\] 
\[\frac{1}{|\varepsilon|^3}\norm{\tilde{h}(x;\epsilon)}_{[C^{2+\alpha}(\Omega)]^N}=\frac{1}{|\varepsilon|^3}\norm{h(x;\varepsilon)-w_{0}-\varepsilon h_{1}(x)-\frac{1}{2}\varepsilon^2 h_{2}(x)}_{[C^{2+\alpha}(\Omega)]^N}\to0,
\]and both the convergences are uniformly in $\varepsilon$, where
$\varepsilon\in\mathbb{R}_+$ and  
$|\varepsilon|$ small enough. The choice of the initial functions to be positive can easily be ensured as we choose our boundary measurements.

By well-posedness, we know that there exists a unique solution $(u_j(x;\varepsilon),v_j(x;\varepsilon),w_j(x;\varepsilon))$ of \eqref{eq:pee_mainuse} and $(u_j(x;\varepsilon),v_j(x;\varepsilon),w_j(x;\varepsilon))=(0,0,0)$ is the solution of \eqref{eq:pee_mainuse} when $\varepsilon=0$.
Now we define the first-order variation form for this system. 

Let $S$ be the solution operator of \eqref{eq:pee_mainuse}. Then there exists a bounded linear operator $L$ from $\mathcal{H}:=[B_{\delta}( C^{2+\alpha}(\partial\Omega))]^2$ to $[C^{2+\alpha}(\Omega)]^2$ such that
\begin{equation}
	\lim\limits_{\|(f,g,h)\|_{\mathcal{H}}\to0}\frac{\|S(f,g,h)-S((u_0,v_0,w_0))- L(f,g,h)\|_{[C^{2+\alpha}(E)]^2}}{\|(f,g,h)\|_{\mathcal{H}}}=0,
\end{equation} 
where $\|(f,g,h)\|_{\mathcal{H}}:=\|f\|_{ C^{2+\alpha}(\partial\Omega)      }+\|g\|_{C^{2+\alpha}(\partial\Omega)}+\|h\|_{C^{2+\alpha}(\partial\Omega)}$.

Now we consider $\varepsilon=0$.
Then it is easy to check that $L(f,g,h)|_{\varepsilon=0}$ is the solution map of the following system:
\begin{equation}\label{eq:pee_pf1st}
    \begin{cases} 
\partial_t u^{(I)}_{j}(x,t)=\Delta u^{(I)}_j(x,t)+r_j u^{(I)}_j(x,t), &\  \text{in} \    Q,\\ 
0=\Delta v^{(I)}_j (x,t)+\alpha^{10}_j(x)u^{(I)}_j(x,t)+\alpha^{01}_jv^{(I)}_j(x,t),\, &\  \text{in} \    Q,\\ 
0=\Delta w^{(I)}_j (x,t)+\beta^{10}_j(x)u^{(I)}_j(x,t)+\beta^{01}_jw^{(I)}_j(x,t), &\  \text{in} \    Q,\\ 
\partial_{\nu}u^{(I)}_{j}(x,t)=\partial_{\nu}v^{(I)}_{j}(x,t)=\partial_{\nu}w^{(I)}_{j}(x,t)=0 ,&\  \text{on}\    \Sigma,\\ 
u^{(I)}_j (x,0)=f_1(x),\, v^{(I)}_j(x,0)=g_1(x),\, w^{(I)}_j(x,0)=h_1(x), &\  \text{in} \   \Omega.\\ 
\end{cases} 
\end{equation}
This system is called the first-order linearization system. The positivity of $(u^{(I)}_j,v^{(I)}_j,w^{(I)}_j)$ are ensured by the non-negative of $(f_1,g_1,h_1)$. In the following, we define \[ (u^{(I)}_j, v^{(I)}_j, w^{(I)}_j):=L(f,g,h)\Big|_{\varepsilon=0}. \]
For notational convenience, we write
\[u^{(I)}_j=\partial_{\varepsilon}u_j(x;\varepsilon)|_{\varepsilon=0}, \, v^{(I)}_j=\partial_{\varepsilon}v_j(x;\varepsilon)|_{\varepsilon=0},\,\text{and}\, w^{(I)}_j=\partial_{\varepsilon}w_j(x;\varepsilon)|_{\varepsilon=0}.\]
In our subsequent discussion, we use these notations to simplify the presentation, and their intended meaning will be clear within the given context.

\textbf{Recovery of rate of proliferation $r$.} First, we show $r_1=r_2$. Let $\bar{u}^{(I)}(x,t):=u^{(I)}_1 (x,t)-u^{(I)}_2(x,t)$. From \eqref{eq:pee_pf1st} and the condition $\mathcal{M}^{+}_{A_1} = \mathcal{M}^{+}_{A_2}$, we derive the following system for $\bar{u}^{(I)}$:
\begin{equation}\label{eq:pee_bar_u1}
    \begin{cases} 
\partial_t \bar{u}^{(I)}(x,t)-\Delta \bar{u}^{(I)}(x,t) = r_1\bar{u}^{(I)}(x,t)+(r_1-r_2)u^{(I)}_2(x,t), &\  \text{in} \   Q,\\ 
\partial_{\nu}  \bar{u}^{(I)}(x,t)= \bar{u}^{(I)}(x,t)=0, &\  \text{on} \   \Sigma,\\ 
 \bar{u}^{(I)}(x,0)=\bar{u}^{(I)}(x,T)=0,& \  \text{in}\    \Omega. 
\end{cases} 
\end{equation}

Let $\omega$ be a solution of the system 
\begin{equation}\label{eq:pee_omega1}
-\partial_{t }\omega -\Delta \omega -r_1 \omega=0 \  \text{in} \  Q,
\end{equation}
where $r_1$ is an unknown constant. A CGO solution for $\omega$ to \eqref{eq:pee_omega1} is given by:
\begin{equation}\label{eq:pee_CGO_w}
\omega=e^{(|\xi|^2-r_1)t-\text{i}\zeta \cdot x}, \text{ with } \text{i}=\sqrt{-1} \text{ for } \zeta \in \mathbb{R}^n.
\end{equation}

Note that $u^{(I)}_2(x,t)$ in \eqref{eq:pee_pf1st} satisfies the form of \eqref{eq:pee_lemma} with $q=1$, $k=-r_2$, and $r_2$ is an unknown constant. There exist $\theta_2 \in \mathbb{R}$ and $l_2(x)\in C^{\infty}(\Omega)$ such that $u^{(I)}_2(x,t)$ can be written in:
\begin{equation}\label{eq:pee_SplitForm_uI}
    u^{(I)}_2(x,t)=e^{\theta_2 t}l_2(x).
\end{equation}

Multiplying both sides of \eqref{eq:pee_bar_u1} by $\omega$ and integrating by parts, we obtain
\begin{equation}\label{eq:pee_integral_r}
\int_{Q} (r_1-r_2)u^{(I)}_2(x,t) \omega(x,t) dxdt=0. 
\end{equation}

Substituting  \eqref{eq:pee_CGO_w} and \eqref{eq:pee_SplitForm_uI} into \eqref{eq:pee_integral_r} yields:
\begin{equation}\label{eq:pee_Substitute_r}
    \int^{T}_{0}e^{\theta_2 t}e^{(|\zeta|^2-r_1)t}dt \int_{\Omega} (r_1-r_2)l_2(x;\theta_2) e^{-\text{i}\zeta \cdot x} dx=0, 
\end{equation}
which simplifies to:
\begin{equation}\label{eq:pee_VariableApart_r}
\int_{\Omega} (r_1-r_2)l_2(x;\theta_2) e^{-\text{i}\zeta \cdot x} dx=0.\end{equation}
Since this holds for any Neumann eigenfunction $l_2(x;\theta_2)$ of $\Delta,$ we conclude that
\[r_1=r_2:=r.\]

At this stage, we can rewrite \eqref{eq:pee_bar_u1} as:
\begin{equation}\label{eq:pee_bar_u1_after}
    \begin{cases} 
\partial_t \bar{u}^{(I)}(x,t)-\Delta \bar{u}^{(I)}(x,t) -r\bar{u}^{(I)}(x,t)=0, &\  \text{in} \   Q,\\ 
\partial_{\nu}  \bar{u}^{(I)}(x,t)= \bar{u}^{(I)}(x,t)=0, &\  \text{on} \   \Sigma,\\ 
 \bar{u}^{(I)}(x,0)=\bar{u}^{(I)}(x,T)=0,& \  \text{in}\    \Omega. 
\end{cases} 
\end{equation}
This system has the trivial solution $\bar{u}^{(I)}(x,t)=0$. Due to the uniqueness of the solution under the specified boundary and initial conditions, it follows that $u^{(I)}_1 (x,t)=u^{(I)}_2(x,t):=u^{(I)}(x,t).$ Furthermore, the equation \eqref{eq:pee_SplitForm_uI} provides an alternative expression for $u^{(I)}(x,t)$.

\textbf{Recovery of source terms $\alpha^{10}(x)$ and $\alpha^{01}$.}
Next, we turn to the identification of $\alpha^{10}_j(x)$ and $\alpha^{01}_j$, which can be recovered simultaneously. Let $\bar{v}^{(I)}(x,t):=v^{(I)}_1 (x,t)-v^{(I)}_2(x,t)$. From \eqref{eq:pee_pf1st} and $\mathcal{M}^{+}_{A_1} = \mathcal{M}^{+}_{A_2}$, we obtain the following system for $\bar{v}^{(I)}(x,t)$:
\begin{equation}\label{eq:pee_bar_v1}
    \begin{cases} 
-\Delta \bar{v}^{(I)}(x,t)-\alpha^{01}_1\bar{v}^{(I)}(x,t) = (\alpha^{10}_1-\alpha^{10}_2)u^{(I)}(x,t)+(\alpha^{01}_1-\alpha^{01}_2)v^{(I)}_2(x,t), &\  \text{in} \   Q,\\ 
\partial_{\nu}  \bar{v}^{(I)}(x,t)= \bar{v}^{(I)}(x,t)=0, &\  \text{on} \   \Sigma,\\ 
 \bar{v}^{(I)}(x,0)=\bar{v}^{(I)}(x,T)=0,& \  \text{in}\    \Omega. 
\end{cases} 
\end{equation}

Let $\omega$ be a solution of the system 
\begin{equation}\label{eq:pee_omega2}
-\Delta \omega -\alpha^{01}_1 \omega=0 \  \text{in} \  \Omega,
\end{equation}
where $\alpha^{01}_1 $ is an unknown constant. 

First, choose $f_1(x)=0$.  Substituting into \eqref{eq:pee_pf1st}, we observe that $u^{(I)}(x,t)=0$ is a solution. By uniqueness of the heat equation, $u^{(I)}(x,t)$ must be trivial under this initial condition. Consequently, from \eqref{eq:pee_bar_v1}, we derive:
\begin{equation}\label{eq:pee_integral_beta1}
    \int_Q(\alpha^{01}_1-\alpha^{01}_2)v^{(I)}_2\omega dxdt=0.
\end{equation}

Without loss of generality, assume that $v^{(I)}_2(x,t)$ is independent of the spatial variable $x_n$ for $x=(x_1,x_2,\dots,x_n)\in \mathbb{R}^n$. Meanwhile, from \eqref{eq:pee_omega2}, a CGO solution is 
$\omega(x)=e^{\zeta\cdot x},x\in \mathbb{R}^n$, where $\vert \zeta\vert^2=-\alpha^{01}_1$, and $\zeta$ satisfies the following conditions:
\[\zeta=\xi+\text{i}\xi^{\perp},\, \xi=(0,0,\dots,0,\xi_n)\in\mathbb{R}^n,\, \xi^{\perp}=(\xi^{\perp}_1,\dots,\xi^{\perp}_{n-1},0)\in\mathbb{R}^n,\]
with $\xi,\xi^{\perp}$ satisfying
\[ (\xi^{\perp}_1)^2+\cdots+(\xi^{\perp}_{n-1})^2=(\xi_n)^2 .\]

Separating the spatial variables in \eqref{eq:pee_integral_beta1}, we obtain:
\begin{align}\label{eq:pee_integral_beta1_repo}
   \int_Q&(\alpha^{01}_1-\alpha^{01}_2)v^{(I)}_2(x,t)\omega(x) dxdt = \\ \notag
   &\int_{\{x_n:(x^{\prime},x_n)\in \Omega\}}e^{\xi_n\cdot x_n}dx_n\cdot\int_{\{x^{\prime}:(x^{\prime},x_n)\in \Omega\}\times(0,T)}(\alpha^{01}_1-\alpha^{01}_2)v^{(I)}_2(x^{\prime},t)e^{\text{i}\xi^{\prime}\cdot x^{\prime}}dx^{\prime}dt=0,
\end{align}
where $x^{\prime}=(x_1,\dots,x_{n-1})\in \mathbb{R}^{n-1},\xi^{\prime}=(\xi_1,\dots,\xi_{n-1})\in \mathbb{R}^{n-1}$. Since $\xi_{n}$ can be chosen arbitrarily, the term $\int_{\{x_n:(x^{\prime},x_n)\in \Omega\}}e^{\xi_n\cdot x_n}dx_n$ is non-zero. Therefore, \eqref{eq:pee_integral_beta1_repo} simplifies to:
\begin{equation}\label{eq:pee_integral_beta1_sep}
   \int_{\{x^{\prime}:(x^{\prime},x_n)\in \Omega\}\times(0,T)}(\alpha^{01}_1-\alpha^{01}_2)v^{(I)}_2(x^{\prime},t)e^{\text{i}\xi^{\prime}\cdot x^{\prime}}dx^{\prime}dt=0, 
\end{equation}
which holds for any $\xi^{\prime}\in \mathbb{R}^{n-1}$. Let $A(x^{\prime})=(\alpha^{01}_1-\alpha^{01}_2)v^{(I)}_2(x^{\prime},t)$.  The left-hand side of \eqref{eq:pee_integral_beta1_sep} represents the Fourier transform of $\int^{T}_{0}A(x^{\prime},t)dt$.  By the inverse Fourier transform, $\int^{T}_{0}A(x^{\prime},t)dt=0$. Consequently, by the fundamental theorem of calculus, there exists a time $t_m$ such that $A(x^{\prime},t_m)=0$. 

Given any initial condition $g_1(x)>0$, we have $v^{(I)}_2(x^{\prime},t)>0$ by the maximum principle for elliptic equations, so $v^{(I)}_2(x^{\prime},t_m)>0$. Hence, to ensure $A(x^{\prime},t_m)=0$, we must have $\alpha^{01}_1=\alpha^{01}_2$. We denote this common value as  $\alpha^{01}$.

Once $\alpha^{01}$ is recovered, we can adjust the initial value $f_1$ to more general cases, which does not affect the derivation of the unknown coefficient function $\alpha^{10}_j(x)$. Consequently, from \eqref{eq:pee_bar_v1}, we obtain:
\begin{equation}\label{eq:pee_integral_alpha1}
    \int_Q(\alpha^{10}_1(x)-\alpha^{10}_2(x))u^{(I)}\omega dxdt=0.
\end{equation}

In the case of recovering $\alpha^{10}_j(x)$, we use a fundamental CGO solution $\omega$ to \eqref{eq:pee_omega2}, given by $e^{\text{i}\zeta\cdot x}$, where $\vert \zeta\vert^2=-\alpha^{01}_1$. Meanwhile, by Lemma \ref{lem:pee_solForm}, $u^{(I)}(x,t)$ can be represented using \eqref{eq:pee_SplitForm_uI}. Thus, we transform \eqref{eq:pee_integral_alpha1} into:
\begin{equation}\label{eq:pee_VariableApart_alpha}
    \int^T_0 e^{\theta t}dt\int_{\Omega} (\alpha^{10}_1(x)-\alpha^{10}_2(x))l_2(x;\theta)e^{\text{i}\zeta \cdot x}dx=0.
\end{equation}
Since $l_2(x;\theta)$ is any Neumann eigenfunction of $\Delta$, it follows that:
\begin{equation}\label{eq:pee_alpha1}
    \alpha^{10}_1(x)=\alpha^{10}_2(x).
\end{equation}
We denote this common function as $\alpha^{10}(x)$. Note that physically, $\alpha^{10}(x)$ and $\alpha^{01}$ represent different concepts: $\alpha^{10}(x)$ corresponds to the chemoattractant function, while $\alpha^{01}$ represents the decay rate.

At this stage, we can simplify \eqref{eq:pee_bar_v1} to:
\begin{equation}\label{eq:pee_bar_v1_after}
    \begin{cases} 
-\Delta \bar{v}^{(I)}(x,t) -\alpha^{01}_1\bar{v}^{(I)}(x,t)=0, &\  \text{in} \   Q,\\ 
\partial_{\nu}  \bar{v}^{(I)}(x,t)= \bar{v}^{(I)}(x,t)=0, &\  \text{on} \   \Sigma,\\ 
 \bar{v}^{(I)}(x,0)=\bar{v}^{(I)}(x,T)=0,& \  \text{in}\    \Omega. 
\end{cases} 
\end{equation}
This implies $ \bar{v}^{(I)}(x,t)=0$, so $v^{(I)}_1 (x,t)=v^{(I)}_2(x,t):=v^{(I)}(x,t)$.

\textbf{Recovery of the source terms $\beta^{10}(x)$ and $\beta^{01}$.}  We recover $\beta^{10}(x)$ and $\beta^{01}$ similarly. Let $\bar{w}^{(I)}(x,t):=w^{(I)}_1 (x,t)-w^{(I)}_2(x,t)$. From \eqref{eq:pee_pf1st} and $\mathcal{M}^{+}_{A_1}=\mathcal{M}^{+}_{A_2}$, we derive the following system for $\bar{w}^{(I)}(x,t)$:
\begin{equation}\label{eq:pee_bar_w1}
    \begin{cases} 
-\Delta \bar{w}^{(I)}(x,t)-\beta^{01}_1\bar{w}^{(I)}(x,t) = (\beta^{10}_1-\beta^{10}_2)u^{(I)}(x,t)+(\beta^{01}_1-\beta^{01}_2)w^{(I)}_2(x,t), &\  \text{in} \   Q,\\ 
\partial_{\nu}  \bar{w}^{(I)}(x,t)= \bar{w}^{(I)}(x,t)=0, &\  \text{on} \   \Sigma,\\ 
 \bar{w}^{(I)}(x,0)=\bar{w}^{(I)}(x,T)=0,& \  \text{in}\    \Omega. 
\end{cases} 
\end{equation}

Let $\omega$ be a solution of the following system 
\begin{equation}\label{eq:pee_omega3}
-\Delta \omega -\beta^{01}_1 \omega=0 \  \text{in} \  \Omega,
\end{equation}
where $\beta^{01}_1$ is an unknown constant. 

Next, we set $f_1(x)=0$, making $u^{(I)}(x,t)$ trivial. From \eqref{eq:pee_bar_w1}, we obtain:
\begin{equation}\label{eq:pee_integral_b01}
    \int_Q(\beta^{01}_1-\beta^{01}_2)w^{(I)}_2(x,t)\omega(x) dxdt=0,
\end{equation}

Similar to the recovery of $\alpha^{01}$, assume $w^{(I)}_2(x,t)$ is independent of the spatial variable $x_n$, where $x=(x_1,x_2,\dots,x_n)\in \mathbb{R}^n$. Choose the CGO solution for $\omega(x)$ to the equation \eqref{eq:pee_omega3} as
$\omega(x)=e^{\zeta\cdot x}$, $x\in \mathbb{R}^n$, where $\vert \zeta\vert^2=-\beta^{01}_1$, and $\zeta$ satisfies the following conditions:
\[\zeta=\xi+\text{i}\xi^{\perp},\, \xi=(0,0,\dots,0,\xi_n)\in\mathbb{R}^n,\, \xi^{\perp}=(\xi^{\perp}_1,\dots,\xi^{\perp}_{n-1},0)\in\mathbb{R}^n,\]
with $\xi,\xi^{\perp}$ satisfying
\[ (\xi^{\perp}_1)^2+\cdots+(\xi^{\perp}_{n-1})^2=(\xi_n)^2 .\]

Then \eqref{eq:pee_integral_b01} becomes:
\begin{align}\label{eq:pee_integral_b01_repo}
   \int_Q&(\beta^{01}_1-\beta^{01}_2)w^{(I)}_2(x,t)\omega(x) dxdt = \\ \notag
   &\int_{\{x_n:(x^{\prime},x_n)\in \Omega\}}e^{\xi_n x_n}dx_n\cdot\int_{\{x^{\prime}:(x^{\prime},x_n)\in \Omega\}\times(0,T)}(\beta^{01}_1-\beta^{01}_2)w^{(I)}_2(x^{\prime},t)e^{\text{i}\xi^{\prime}\cdot x^{\prime}}dx^{\prime}dt=0,
\end{align}
where $x^{\prime}=(x_1,\dots,x_{n-1})\in \mathbb{R}^{n-1},\xi^{\prime}=(\xi_1,\dots,\xi_{n-1})\in \mathbb{R}^{n-1}$. Since $\xi_{n}$ is arbitrarily chosen, the term $\int_{\{x_n:(x^{\prime},x_n)\in \Omega\}}e^{\xi_n x_n}dx_n$ is non-zero. Thus, \eqref{eq:pee_integral_b01_repo} simplifies to:
\begin{equation}\label{eq:pee_integral_b01_sep}
   \int_{\{x^{\prime}:(x^{\prime},x_n)\in \Omega\}\times(0,T)}(\beta^{01}_1-\beta^{01}_2)w^{(I)}_2(x^{\prime},t)e^{\text{i}\xi^{\prime}\cdot x^{\prime}}dx^{\prime}dt=0, 
\end{equation}
which holds for any $\xi^{\prime}\in \mathbb{R}^{n-1}$. Let $B(x^{\prime})=(\beta^{01}_1-\beta^{01}_2)w^{(I)}_2(x^{\prime},t)$.  The left-hand side of \eqref{eq:pee_integral_b01_sep} represents the Fourier transform of $\int^{T}_{0}B(x^{\prime},t)dt$.  By the inverse Fourier transform, $\int^{T}_{0}B(x^{\prime},t)dt=0$. Consequently, by the fundamental theorem of calculus, there exists a time $t_m$ such that $B(x^{\prime},t_m)=0$. 

Given any initial condition $h_1(x)>0$, we have $w^{(I)}_2(x^{\prime},t)>0$ by the maximum principle for elliptic equations, which implies $w^{(I)}_2(x^{\prime},t_m)>0$. Hence, to ensure $B(x^{\prime},t_m)=0$, we conclude that $\beta^{01}_1=\beta^{01}_2$, and we unify its notation as $\beta^{01}$.

Next, to recover $\beta^{10}(x)$, we reset the initial condition to $f_1>0$. Then \eqref{eq:pee_bar_w1} gives:
\begin{equation}\label{eq:pee_integral_gamma1}
    \int_Q(\beta^{10}_1(x)-\beta^{10}_2(x))u^{(I)}(x,t)\omega(x) dxdt=0.
\end{equation}
It is known that $u^{(I)}(x,t)$ can be represented using \eqref{eq:pee_SplitForm_uI}, and we give the CGO solution $\omega$ for \eqref{eq:pee_omega3} as $\omega(x)=e^{\text{i}\zeta\cdot x}$, with $\vert \zeta \vert^2=-\beta^{01}_1$. Therefore, we transform \eqref{eq:pee_integral_gamma1} into:
\begin{equation}\label{eq:pee_VariableApart_beta10}
    \int^T_0 e^{\theta_2t}dt\int_{\Omega} (\beta^{10}_1(x)-\beta^{10}_2(x))l_2(x;\theta)e^{\text{i}\zeta \cdot x}dx=0.
\end{equation}
Since $l_2(x;\theta)$ is any Neumann eigenfunction of $\Delta$, we obtain the following result:
\begin{equation}\label{eq:pee_beta10}
    \beta^{10}_1(x)=\beta^{10}_2(x).
\end{equation}
Denote this as $\beta^{10}(x)$. 

Substituting these results into \eqref{eq:pee_bar_w1} yields:
\begin{equation}\label{eq:pee_bar_w1_after}
    \begin{cases} 
-\Delta \bar{w}^{(I)}(x,t)-\beta^{01}_1\bar{w}^{(I)}(x,t)=0, &\  \text{in} \   Q,\\ 
\partial_{\nu}  \bar{w}^{(I)}(x,t)= \bar{w}^{(I)}(x,t)=0, &\  \text{on} \   \Sigma,\\ 
 \bar{w}^{(I)}(x,0)=\bar{w}^{(I)}(x,T)=0,& \  \text{in}\    \Omega, 
\end{cases} 
\end{equation}
which implies $ \bar{w}^{(I)}(x,t)=0$, leading to $w^{(I)}_1 (x,t)=w^{(I)}_2(x,t):=w^{(I)}(x,t)$.

\subsection{Recovery of the second-order coefficients} \label{sec:pee_mainR2}
In this subsection, we begin by introducing the second-order variation form associated with the system \eqref{eq:pee_mainpfuse}. Similar to the definition in \ref{sec:pee_mainR1},
we consider 
\[u^{(II)}_j:=\partial_{\varepsilon}^2u_j|_{\varepsilon=0},\, 	v^{(II)}_j:=\partial_{\varepsilon}^2v_j|_{\varepsilon=0},\, \text{and}\, w^{(II)}_j:=\partial_{\varepsilon}^2w_j|_{\varepsilon=0}.\]
$(u^{(II)}_j,v^{(II)}_j,w^{(II)}_j)$ can be interpreted as the output of the second-order Fr\'echet derivatives of $S$ at a specific point. Then, we have the second-order variation as follows:


\begin{equation}\label{eq:pee_pf2nd}
    \begin{cases} 
\partial_t u^{(II)}_{j}=\Delta u^{(II)}_j+r u^{(II)}_j-2\chi_j\nabla u^{(I)}\nabla v^{(I)}-2\chi_ju^{(I)}\Delta v^{(I)}\\
\qquad \qquad \qquad+2\xi_j\nabla u^{(I)}\nabla w^{(I)}+2\xi_ju^{(I)}\Delta w^{(I)}-2\mu_j(u^{(I)})^2, &\  \text{in} \    Q,\\ 
0=\Delta v^{(II)}_j +\alpha^{10} u^{(II)}_j-\alpha^{01} v^{(II)}_j+\alpha^{11}_j u^{(I)}v^{(I)}+2\alpha^{20}_j(u^{(I)})^2+2\alpha^{02}_j(v^{(I)})^2, &\  \text{in} \    Q,\\ 
0=\Delta w^{(II)}_j  +\beta^{10} u^{(II)}_j-\beta^{01} w^{(II)}_j+\beta^{11}_j u^{(I)}w^{(I)}+2\beta^{20}_j(u^{(I)})^2+2\beta^{02}_j(w^{(I)})^2, &\  \text{in} \    Q,\\ 
\partial_{\nu}u^{(II)}_{j}=\partial_{\nu}v^{(II)}_{j}=\partial_{\nu}w^{(II)}_{j}=0 ,&\  \text{on}\    \Sigma,\\ 
u^{(II)}_j (x,0)=2f_2(x),\, v^{(II)}_j(x,0)=2g_2(x),\, w^{(II)}_j(x,0)=2h_2(x), &\  \text{in} \   \Omega.\\ 
\end{cases} 
\end{equation}

Note that the non-linear terms of the system \eqref{eq:pee_pf2nd} depend on the first-order linearized system \eqref{eq:pee_pf1st}, all the conclusions we obtained from \eqref{eq:pee_pf1st} also apply to \eqref{eq:pee_pf2nd}. 

\textbf{Recovery of chemosensitivity $\chi$, $\xi$ and self-suppression coefficient $\mu$.} 
From \eqref{eq:pee_pf1st}, it is clear that $\Delta v^{(I)}$ and $\Delta w^{(I)}$ satisfy:
\[\Delta v^{(I)}(x,t)=-\alpha^{10}(x) u^{(I)}(x.t)-\alpha^{01} v^{(I)}(x,t),\]
\[ \Delta w^{(I)}(x,t)=-\beta^{10}(x) u^{(I)}(x,t)-\beta^{01} w^{(I)}(x,t).\]
By controlling the initial data so that $\Delta v^{(I)}(x,t)=\Delta w^{(I)}(x,t)=0$, we substitute the relationships between $v^{(I)}(x,t),w^{(I)}(x,t)$ and $u^{(I)}(x,t)$ into the first equation of \eqref{eq:pee_pf2nd}. This leads to the transformation:
\begin{align}\label{eq:pee_uSec_transformed}
    \partial_t u^{(II)}_{j}&=\Delta u^{(II)}_j+r u^{(II)}_j+\frac{2\chi_j}{\alpha^{01}}\left[\nabla \alpha^{10}u^{(I)}\nabla u^{(I)}+\alpha^{10}(\nabla u^{(I)})^2\right] \notag\\
    &\qquad\qquad\qquad\qquad\qquad\quad -\frac{2\xi_j}{\beta^{01}}\left[\nabla \beta^{10}u^{(I)}\nabla u^{(I)}+\beta^{10}(\nabla u^{(I)})^2\right]
    -2\mu_j(u^{(I)})^2.
\end{align}
For simplicity, denote $C(x,t)=\nabla \alpha^{10}(x)u^{(I)}(x,t)\nabla u^{(I)}(x,t)+\alpha^{10}(x)(\nabla u^{(I)}(x,t))^2$ and $D(x,t)=\nabla \beta^{10}(x)u^{(I)}(x,t)\nabla u^{(I)}(x,t)+\beta^{10}(x)(\nabla u^{(I)}(x,t))^2$.

Let $\bar{u}^{(II)}(x,t):=u^{(II)}_{1}(x,t)-u^{(II)}_{2}(x,t)$. From \eqref{eq:pee_pf2nd}, \eqref{eq:pee_uSec_transformed}, and $\mathcal{M}^{+}_{A_1} = \mathcal{M}^{+}_{A_2}$, we obtain:
\begin{equation}\label{eq:pee_bar_u2}
    \begin{cases} 
\partial_t \bar{u}^{(II)}(x,t)-\Delta \bar{u}^{(II)}(x,t) = r \bar{u}^{(II)}(x,t)+\frac{2}{\alpha^{01}}(\chi_1-\chi_2)C(x,t)\\
\qquad\qquad\qquad\qquad\qquad\qquad\ -\frac{2}{\beta^{01}}(\xi_1-\xi_2)D(x,t)-2(\mu_1-\mu_2)(u^{(I)}(x,t))^2, &\  \text{in} \   Q,\\ 
\partial_{\nu}  \bar{u}^{(II)}(x,t)= \bar{u}^{(II)}(x,t)=0, &\  \text{on} \   \Sigma,\\ 
 \bar{u}^{(II)}(x,0)=\bar{u}^{(II)}(x,T)=0,& \  \text{in}\    \Omega. 
\end{cases} 
\end{equation}

Let $\omega$ be a solution of  
\begin{equation}\label{eq:pee_omega4}
-\partial_{t }\omega -\Delta \omega -r \omega=0 \  \text{in} \  Q,
\end{equation}
where $r$ is an unknown constant.

Multiplying both sides of \eqref{eq:pee_bar_u2} by $\omega$ and integrating by parts, we achieve
\begin{equation}\label{eq:pee_three_coefficients}
    \int_Q \left[\frac{2}{\alpha^{01}}(\chi_1-\chi_2)C-\frac{2}{\beta^{01}}(\xi_1-\xi_2)D-2(\mu_1-\mu_2)(u^{(I)})^2\right]\omega dxdt=0.
\end{equation}

The recovery of the coefficients can then be divided into three cases.

First, assume $\xi_1 = \xi_2$ and $\mu_1 = \mu_2$. Then \eqref{eq:pee_three_coefficients} reduces to:
\begin{equation}\label{eq:pee_chi}
    \int_Q \frac{2}{\alpha^{01}}(\chi_1-\chi_2)C(x,t) \omega(x,t) dxdt=0.
\end{equation}
Choose the CGO solution for $\omega(x)$ to \eqref{eq:pee_omega4} as $\omega(x)=e^{(-\vert\zeta\vert^2-r)t+\zeta\cdot x},$  where $\zeta\in\mathbb{R}$ and satisfies:
\[\zeta=\eta+\text{i}\eta^{\perp},\, \eta=(0,0,\dots,0,\eta_n)\in\mathbb{R}^n,\, \eta^{\perp}=(\eta^{\perp}_1,\dots,\eta^{\perp}_{n-1},0)\in\mathbb{R}^n,\]
with $\eta,\eta^{\perp}$ satisfying $(\eta^{\perp}_1)^2+\cdots+(\eta^{\perp}_{n-1})^2=(\eta_n)^2.$
Then \eqref{eq:pee_chi} can be written as:
\begin{equation}\label{eq:pee_chi_repo}\begin{small}
   \int_{\{x_n:(x^{\prime},x_n)\in \Omega\}}e^{\eta_n x_n}dx_n\cdot\int_{\{x^{\prime}:(x^{\prime},x_n)\in \Omega\}\times(0,T)}\frac{2(\chi_1-\chi_2)}{\alpha^{01}}C(x^{\prime},t)e^{(-|\zeta|^2-r)t+\text{i}\eta^{\prime}\cdot x^{\prime}}dx^{\prime}dt=0,\end{small}
\end{equation}
where $x^{\prime}=(x_1,\dots,x_{n-1})\in \mathbb{R}^{n-1},\eta^{\prime}=(\eta_1,\dots,\eta_{n-1})\in \mathbb{R}^{n-1}$. Since $\eta_{n}$ is chosen arbitrarily, the term $\int_{\{x_n:(x^{\prime},x_n)\in \Omega\}}e^{\eta_n x_n}dx_n$ is non-zero. Therefore, \eqref{eq:pee_chi_repo} simplifies to:
\begin{equation}\label{eq:pee_integral_chi_sep}
   \int_{\{x^{\prime}:(x^{\prime},x_n)\in \Omega\}\times(0,T)}\frac{2(\chi_1-\chi_2)}{\alpha^{01}}C(x^{\prime},t)e^{(-|\zeta|^2-r)t+\text{i}\eta^{\prime}\cdot x^{\prime}}dx^{\prime}dt=0, 
\end{equation}
which holds for any $\eta^{\prime}\in \mathbb{R}^{n-1}$. It is clear that the left-hand side of \eqref{eq:pee_integral_chi_sep} represents the Fourier transform of $\int^{T}_{0}\frac{2(\chi_1-\chi_2)}{\alpha^{01}}C(x^{\prime},t)e^{(-|\zeta|^2-r)t}dt$.  By the inverse Fourier transform, $\int^{T}_{0}\frac{2(\chi_1-\chi_2)}{\alpha^{01}}C(x^{\prime},t)e^{(-|\zeta|^2-r)t}dt=0$. Consequently, by the fundamental theorem of calculus, there exists a time $t_m$ such that 
\begin{equation}\label{eq:pee_chi_conclud}
    \frac{2(\chi_1-\chi_2)}{\alpha^{01}}C(x^{\prime},t_m)e^{(-|\zeta|^2-r)t_m}=0.
\end{equation}

Now we expand $C(x^{\prime},t_m)$ to $\nabla \alpha^{10}(x)u^{(I)}(x,t_m)\nabla u^{(I)}(x,t_m)+\alpha^{10}(x)(\nabla u^{(I)}(x,t_m))^2.$ By Lemma \ref{lem:pee_solForm}, $u^{(I)}(x,t_m)$ can be expressed in $e^{\theta t_m}l(x;\theta)$, and there does not exist an open subset $U$ of $\Omega$ such that $\nabla l(x;\theta)=0$. Equation \eqref{eq:pee_chi_conclud} now indicates
\begin{equation}\label{eq:pee_chi_conclud_repo}
    \frac{2(\chi_1-\chi_2)}{\alpha^{01}}\left[\nabla \alpha^{10}(x^{\prime})e^{2\theta t_m}l(x^{\prime};\theta)\nabla l(x^{\prime};\theta)+\alpha^{10}(x^{\prime}) e^{2\theta t_m} (\nabla l(x^{\prime};\theta))^2 \right]e^{(-|\zeta|^2-r)t_m}=0.
\end{equation}

\sloppy 
Suppose $C(x^{\prime},t_m)=0$. By the definition of $C$, this requires $\nabla \alpha^{10}(x^{\prime})l(x^{\prime};\theta)=\alpha^{10}(x^{\prime})\nabla l(x^{\prime}; \theta)$, which implies $\alpha^{10}(x^{\prime})=c l(x^{\prime};\theta)$, for some constant $c$. However, $l(x^{\prime};\theta)$ is any eigenfunction of the equations for $u^{(I)}(x^{\prime},t)$ and satisfies
\begin{equation}\notag\begin{cases}
    \Delta l(x;\theta)+(r-\theta)l(x;\theta)=0,&\  \text{in} \    Q,\\ 
    \partial_{\nu} l(x;\theta)=0,&\  \text{on} \    \Sigma,\\
    l(x;\theta)=f_1(x),&\  \text{in} \    \Omega.
 \end{cases}   
\end{equation}
Thus, there such an $\alpha^{10}(x^{\prime})$ cannot exist, meaning that the assumption cannot hold. Therefore, for any initial condition $f_1(x)>0$, $C(x^{\prime},t_m)$ must be non-zero. To satisfy \eqref{eq:pee_chi_conclud_repo}, we must have $\chi_1=\chi_2$, denoted as $\chi$.


Similarly, if $\chi_1=\chi_2$ and $\mu_1=\mu_2$ are known, we can recover $\xi$ in a same way.

For the third case, assume $\chi$ and $\xi$ are known. Then, \eqref{eq:pee_three_coefficients} gives:
\begin{equation}\label{eq:pee_mu}
     \int_Q 2(\mu_1-\mu_2)(u^{(I)})^2\omega dxdt=0.
\end{equation}
Here, we choose a simpler form of CGO solution $\omega$ to \eqref{eq:pee_omega4} as 
\begin{equation}\label{eq:pee_CGO_w4}
    \omega=e^{(\vert \zeta\vert^2-r)t-\text{i}\zeta\cdot x}, \text{ with } \text{i}=\sqrt{-1}\text{ for }\zeta \in \mathbb{R}^n.
\end{equation}

Substituting \eqref{eq:pee_SplitForm_uI} and \eqref{eq:pee_CGO_w4} into \eqref{eq:pee_mu} and separating variables, we obtain:
\begin{equation}\label{eq:pee_mu_Sep}
   2 \int_0^T e^{(2\mu_2+|\zeta|^2-r)t}dt \int_{\Omega}(\mu_1-\mu_2)l^2(x) e^{-\text{i}\zeta\cdot x}dx=0.
\end{equation}
Since this holds for any Neumann eigenfunction $l(x;\mu)$ of $\Delta$, we conclude $\mu_1=\mu_2$, and denoted as $\mu$.


Substituting these results into \eqref{eq:pee_bar_u2} gives:
\begin{equation}\label{eq:pee_bar_u2_after}
    \begin{cases}
        \partial_t \bar{u}^{(II)}(x,t)-\Delta \bar{u}^{(II)}(x,t) = r \bar{u}^{(II)}(x,t), &\  \text{in} \   Q,\\ 
\partial_{\nu}  \bar{u}^{(II)}(x,t)= \bar{u}^{(II)}(x,t)=0, &\  \text{on} \   \Sigma,\\ 
 \bar{u}^{(II)}(x,0)=\bar{u}^{(II)}(x,T)=0,& \  \text{in}\    \Omega. 
    \end{cases}
\end{equation}
It is evident that $\bar{u}^{(II)}(x,t)=0$ is a solution to \eqref{eq:pee_bar_u2_after}. Given the uniqueness of the solution under the specified boundary and initial conditions, we conclude $u^{(II)}_1(x,t)=u^{(II)}_2(x,t):=u^{(II)}(x,t)$.

\textbf{Recovery of the second-order coefficients of source term $\alpha^{11}(x),\alpha^{20}(x)$ and $\alpha^{02}(x)$.} 
Let $\bar{v}^{(II)}(x,t):=\bar{v}^{(II)}_1(x,t)-\bar{v}^{(II)}_2(x,t)$. From \eqref{eq:pee_pf2nd} and $\mathcal{M}^{+}_{A_1} = \mathcal{M}^{+}_{A_2}$, we obtain:
\begin{equation}\label{eq:pee_bar_v2}
    \begin{cases}
       -\Delta \bar{v}^{(II)}-\alpha^{01}\bar{v}^{(II)} \\\qquad\qquad= (\alpha^{11}_1-\alpha^{11}_2)u^{(I)}v^{(I)}+2(\alpha^{20}_1-\alpha^{20}_2)(u^{(I)})^2+2(\alpha^{02}_1-\alpha^{02}_2)(v^{(I)})^2, &\  \text{in} \   Q,\\ 
\partial_{\nu}  \bar{v}^{(II)}(x,t)= \bar{v}^{(II)}(x,t)=0, &\  \text{on} \   \Sigma,\\ 
 \bar{v}^{(II)}(x,0)=\bar{v}^{(I)}(x,T)=0,& \  \text{in}\    \Omega.
    \end{cases}
\end{equation}

First, choose $f_1(x)=0$ and $g_1(x)>0$. Following a similar reasoning as in the previous proof, it follows that $u^{(I)}(x,t)$ must be trivial. Consequently, by taking the solution $\omega(x)$ of \eqref{eq:pee_omega2},  from \eqref{eq:pee_bar_v2}, we derive:
\begin{equation}\label{eq:pee_integral_alpha02}
    \int_Q2(\alpha^{02}_1(x)-\alpha^{02}_2(x))(v^{(I)}(x,t))^2\omega(x) dxdt=0.
\end{equation}

Here we choose the CGO solution for $\omega(x)$ to \eqref{eq:pee_omega2} as $\omega(x)=e^{\zeta\cdot x},$  where $\zeta\in\mathbb{R}$ is of the form in Lemma \ref{lem:pee_MultiSepe}. Then by Lemma \ref{lem:pee_MultiSepe} and the fundamental theorem of calculus, we have 
\[(\alpha^{02}_1(x)-\alpha^{02}_2(x))(v^{(I)}(x,t_m))^2=0,\quad t_m\in(0,T).\]
Since $g_1(x)>0$, by the maximum principle, $v^{(I)}>0$. Therefore, it follows that $\alpha^{02}_1(x)=\alpha^{02}_2(x)$ in $\Omega$.

Next, choose $g_1(x)=0$ and $f_1(x)>0$ to recover $\alpha^{20}(x)$ similarly. 


Finally, choose $f_1(x),g_1(x)>0$, which leads us to $u^{(I)},v^{(I)}>0$, and similarly apply Lemma \ref{lem:pee_MultiSepe} to recover $\alpha^{11}(x)$. 

Then the equations for $\bar{v}^{(II)}(x,t)$ become
\begin{equation}\notag
    \begin{cases}
       -\Delta \bar{v}^{(II)}-\alpha^{01}\bar{v}^{(II)} = 0, &\  \text{in} \   Q,\\ 
\partial_{\nu}  \bar{v}^{(II)}(x,t)= \bar{v}^{(II)}(x,t)=0, &\  \text{on} \   \Sigma,\\ 
 \bar{v}^{(II)}(x,0)=\bar{v}^{(I)}(x,T)=0,& \  \text{in}\    \Omega,
    \end{cases}
\end{equation}
which implies $v^{(II)}_1(x,t)=v^{(II)}_2(x,t)$, denoted as $v^{(II)}(x,t)$.

\textbf{Recovery of the second-order coefficients of the source term $\beta^{11}(x),\beta^{20}(x)$ and $\beta^{02}(x)$.} 
To recover these three coefficients, we follow a similar approach as we recover $\alpha^{11}(x),\alpha^{20}(x)$ and $\alpha^{02}(x)$.

\subsection{Recovery of the higher-order coefficients of the source term} \label{sec:pee_mainR+}
In this subsection, we introduce the high-order linearization framework under a more general setting. Inductively, for $\ell\in\mathbb{N}, N>2$, we define
\[u^{(\ell)}_j=\partial_{\varepsilon}^\ell u_j|_{\varepsilon=0}, \,	v^{(\ell)}_j=\partial_{\varepsilon}^\ell v_j|_{\varepsilon=0}, \,\text{and} \, w^{(\ell)}_j=\partial_{\varepsilon}^\ell w_j|_{\varepsilon=0},\]
and obtain a sequence of parabolic-elliptic-elliptic systems for $\tau=0$.

The main idea for recovering higher-order coefficients $\alpha^{k_1 k_2}$ and $\beta^{k_1 k_2}$ with $k_1 + k_2 = k \geq 3$ is mathematical induction, based on the $k$-th variation of \eqref{eq:pee_mainpfuse}.

Thus, the proof is complete.
$\hfill{\square}$

{\centering \section{APPLICATIONS}   \label{sec:pee_application}}
In this section, we apply the above conclusion to an attraction-repulsion chemotaxis system with superlinear logistic degradation. We demonstrate how to recover the coefficients simultaneously under $\tau=1$ and $\tau=0$.

We recall that the system describes the spatiotemporal dynamics of a biological population $u$ (e.g., cells, bacteria) interacting with two chemical signals 
$v$ (attractant) and $w$ (repellent). The well-posedness of \eqref{eq:pee_apply_main} is discussed in subsection \ref{sec:pee_mainrs}. When $\tau=1$, the system is classified as a total parabolic system, while it becomes a parabolic-elliptic-elliptic system when $\tau=0$. Given the measurement map in \eqref{eq:pee_Measurement2}, we aim to prove Corollary \ref{cor:pee_appcor} from both perspectives: $\tau=1$ and $\tau=0$.

\begin{proof} We carry out the proof in two steps, $\tau=0$ and $\tau=1$. 

For the case $\tau=0$, it is evident that the system \eqref{eq:pee_apply_prop} is a simpler form of the system \eqref{eq:pee_mainpfuse} while $\alpha_j=\alpha^{10}_j$, $\beta_j=\alpha^{01}_j$, $\gamma_j=\beta^{10}_j$, $\delta_j=\beta^{01}_j$ and all the other $\alpha^{pq}_j=\beta^{rs}_j=0$ for $p+q>2$ and $r+s>2$. Thus, we can conclude that $B_1=B_2$ using the same method as in the main proof.

For the case $\tau=1$, the equations \eqref{eq:pee_apply_prop} is a parabolic system, the recovery of the coefficients $\chi_j, \xi_j,r_j$ and $\mu_j$ are identical to the above proof. And we recover the coefficient functions $\alpha_j,\beta_j,\gamma_j$ and $\delta_j$ in a way as in parabolic systems. Without losing generality, we recover $\alpha_j$ and $\beta_j$, the recovery of $\gamma_j$ and $\delta_j$ follow an identical approach.

Following the same procedure for constructing high-order variation forms as outlined in Section \ref{sec:pee_pfmain}, we derive the first-order variation system for $u(x,t)$ and $v(x,t)$ of \eqref{eq:pee_apply_prop} as:
\begin{equation}\label{eq:pee_apply_v}
    \begin{cases} 
\partial_t u^{(I)}_{j}(x,t)=\Delta u^{(I)}_j(x,t)+r_j u^{(I)}_j(x,t), &\  \text{in} \    Q,\\ 
\partial_t v^{(I)}_{j}(x,t)=\Delta v^{(I)}_j (x,t)+\alpha_ju^{(I)}_j(x,t)-\beta_jv^{(I)}_j(x,t),\, &\  \text{in} \    Q,\\  
\partial_{\nu}u^{(I)}_{j}(x,t)=\partial_{\nu}v^{(I)}_{j}(x,t)=0 ,&\  \text{on}\    \Sigma,\\ 
u^{(I)}_j (x,0)=f_1(x),\, v^{(I)}_j(x,0)=g_1(x), &\  \text{in} \   \Omega.\\ 
\end{cases} 
\end{equation}

Following the same approach in Section \ref{sec:pee_pfmain}, we know that $ u^{(I)}_{1}(x,t)= u^{(I)}_{2}(x,t)$. We can denote it as $u^{(I)}(x,t)$, and can express it as $e^{\mu t}l(x;\mu)$ based on Lemma \ref{lem:pee_solForm}. Let $\bar{v}^{(I)}(x,t)=v^{(I)}_1 (x,t)-v^{(I)}_2 (x,t)$, we can obtain the following system based on $\mathcal{M}^{+}_{B_1}(u_0,v_0,w_0)=\mathcal{M}^{+}_{B_2}(u_0,v_0,w_0)$:
\begin{equation}\label{eq:pee_apply_bar_v1}
    \begin{cases} 
\partial_t\bar{v}^{(I)}(x,t)-\Delta \bar{v}^{(I)}(x,t)+\beta_2\bar{v}^{(I)}(x,t) = (\alpha_1-\alpha_2)u^{(I)}(x,t)-(\beta_1-\beta_2)v^{(I)}_1(x,t), &\  \text{in} \   Q,\\ 
\partial_{\nu}  \bar{v}^{(I)}(x,t)= \bar{v}^{(I)}(x,t)=0, &\  \text{on} \   \Sigma,\\ 
 \bar{v}^{(I)}(x,0)=\bar{v}^{(I)}(x,T)=0,& \  \text{in}\    \Omega. 
\end{cases} 
\end{equation}

Let $\omega$ be a solution of the following system 
\begin{equation}\label{eq:pee_apply_omega}
-\partial_{t }\omega -\Delta \omega +\beta_2 \omega=0 \  \text{in} \  Q,
\end{equation}
where $\beta_2$ is an unknown constant, and the CGO solution to $\omega$ is easy to seek from \eqref{eq:pee_apply_omega} as
\begin{equation}\label{eq:pee_apply_CGO}
\omega=e^{(|\xi|^2+\beta_2)t-\text{i}\xi \cdot x},
\end{equation}
with $\text{i}=\sqrt{-1}$ for $\xi \in \mathbb{R}^n.$

Then we multiply $\omega$ on both sides of \eqref{eq:pee_apply_bar_v1} and carry on integration by parts, we now achieve
\begin{equation}\label{eq:pee_apply_integration}
\int_{Q} \big[(\alpha_1-\alpha_2)u^{(I)}(x,t)-(\beta_1-\beta_2)v^{(I)}_1(x,t)\big] \omega dxdt=0. \end{equation}

To recover $\beta_j$, we can choose $f_1(x)=0$, so equations \eqref{eq:pee_apply_v} indicate $u^{(I)}(x,t)=0$. And $v^{(I)}_j(x,t)$ in \eqref{eq:pee_apply_v} satisfies the form asked in Lemma \ref{lem:pee_solForm} and can be expressed in the form of $v^{(I)}(x,t)=e^{\lambda t}m(x;\lambda)$, where $\lambda\in\mathbb{R}^{n}$ and $m(x;\lambda)\in C^2(\Omega)$. And the equation \eqref{eq:pee_apply_integration} can transform into:
\begin{equation}\notag
    \int^{T}_{0}e^{\lambda t}e^{(|\xi|^2+\beta_2)t}dt \int_{\Omega} (\beta_1-\beta_2)m(x;\lambda) e^{-\text{i}\xi \cdot x} dx=0, 
\notag\end{equation}
which yields 
\begin{equation}\notag
\int_{\Omega} (\beta_1-\beta_2)m(x;\lambda) e^{-\text{i}\xi \cdot x}  dx=0.
\notag\end{equation}
Since this holds for any Neumann eigenfunction $m(x;\lambda)$ of $\Delta,$ we obtain
\[\beta_1=\beta_2=:\beta.\]

Then equation \eqref{eq:pee_apply_integration} only contains the term including $u^{(I)}(x,t)$. Once again, we substitute the CGO form of $\omega$ on both sides of \eqref{eq:pee_apply_integration} and separate the variables:
\begin{equation}\notag
    \int^{T}_{0}e^{\mu t}e^{(|\xi|^2+\beta_2)t}dt \int_{\Omega} (\alpha_1-\alpha_2)l(x;\lambda) e^{-\text{i}\xi \cdot x} dx=0, 
\notag\end{equation}
which yields
\begin{equation}\notag
\int_{\Omega} (\alpha_1-\alpha_2)l(x;\lambda) e^{-\text{i}\xi \cdot x}  dx=0.
\notag\end{equation}
It is known that this holds for any Neumann eigenfunction $l(x;\mu)$ of $\Delta,$ we obtain
\[\alpha_1=\alpha_2=:\alpha.\]
Also note that $\alpha$ can be a function depending on the space variable $x$, which does not affect the proving process and simultaneously broadens the field of application. The same reasoning applies to the recovery of $\gamma$ and $\delta$.

In this way, we can recover all the coefficient functions in the set $B$, and hence finish the proof for Proposition \ref{cor:pee_appcor}. \end{proof}

\section*{Acknowledgements}
The work of H. Liu is supported by NSFC/RGC Joint Research Scheme, N CityU101/21, ANR/RGC.
Joint Research Scheme, A-CityU203/19, and the Hong Kong RGC General Research Funds (projects 11311122, 11303125 and 11300821). The work of C. W. K. Lo is supported by the National Natural Science Foundation of China (No. 12501660).

\bibliographystyle{plain}
\bibliography{reference}

\end{document}